\DeclareMathOperator{\supp}{supp}
\def\sumJ(p-1){\underset{|J|=p-1}{{\sum}'}}
\def\sumKp-2{\underset{|K|=p-2}{{\sum}'}}
\def\sumjq+1{\underset {j\leq q+1}\sum}
\def\sumjn-1{\underset {j\leq n-1}\sum}
\def\bt{\begin{theorem}}
\def\el{\end{lemma}}
\def\bl{\begin{lemma}}
\def\et{\end{theorem}}
\def\bp{\begin{proposition}}
\def\ep{\end{proposition}}
\def\bd{\begin{definition}}
\def\ed{\end{definition}}
\def\br{\begin{remark}}
\def\er{\end{remark}}
\def\sumJ{\underset{|J|=k}{{\sum}'}}
\def\T{\text}
\newcommand{\om}{\omega}
\newcommand{\bom}{\bar{\omega}}
\newcommand{\we}{\wedge}
\newcommand{\no}[1]{\|{#1}\|}
\def\R{{\Bbb R}}
\def\I{{\mathcal I}}
\def\C{{\Bbb C}}
\def\Z{{\Bbb Z}}
\def\la{\langle}
\def\ra{\rangle}
\def\di{\partial}
\def\dib{\bar\partial}
\def\Label#1{\label{#1}{\bf(#1)}~}
\numberwithin{equation}{section}
\def\T{\text}
\def\Label#1{\label{#1}}
\theoremstyle{plain}
\newtheorem{theorem}{Theorem}[section]
\newtheorem{corollary}[theorem]{Corollary}
\newtheorem{lemma}[theorem]{Lemma}
\newtheorem{proposition}[theorem]{Proposition}
\theoremstyle{definition}
\newtheorem{definition}[theorem]{Definition}
\theoremstyle{remark}
\newtheorem{remark}[theorem]{Remark}
\DeclareMathOperator{\Rre}{Re}
\newcommand{\p}{\partial}
\newcommand{\dbar}{\bar\partial}
\newcommand{\dbars}{\bar\partial^*}
\newcommand{\dbarb}{\bar\partial_b}
\newcommand{\dbarbs}{\bar\partial^*_b}
\newcommand{\Boxb}{\Box_b}
\newcommand{\vp}{\varphi}
\newcommand{\atopp}[2]{\genfrac{}{}{0pt}{2}{#1}{#2}}
\newcommand{\nn}{\nonumber}
\newcommand{\eps}{\epsilon}
\newcommand{\opC}{\mathcal C}
\newcommand{\lam}{\lambda}
\DeclareMathOperator{\Tr}{Tr}
\newcommand{\opH}{\mathcal H}
\renewcommand{\H}{\mathcal H}
\def\L{\mathcal L }
\def\Re{\text{Re}}
\def\N{\mathcal N}
\begin{document}

\title{The Kohn-Laplace equation on abstract CR manifolds: Global regularity}         
      \author{Tran Vu Khanh and Andrew Raich}          
        \address{T.~V.~Khanh}
        \address{School of Mathematics and Applied Statistics, University of Wollongong, NSW 2522, Australia}
        \email{tkhanh@uow.edu.au}
        \address{A. Raich}
        \address{Department of Mathematical Sciences, SCEN 327, 1 University
        	of Arkansas, Fayetteville, AR 72701, USA} 
        \email{araich@uark.edu}
        \thanks{The first author was supported by ARC grant
        	DE160100173. The second author was partially supported by NSF grant DMS-1405100. This work was done in part while the authors were  visiting members at the Vietnam Institute for Advanced Study in Mathematics (VIASM). They would like to thank the institution for its hospitality and support.}
	\thanks{}

\maketitle

\begin{abstract}Let  $M$ be a compact, pseudoconvex-oriented, $(2n+1)$-dimensional, abstract CR manifold of hypersurface type, $n\geq 2$. 
We prove the following:
\begin{enumerate}[(i)]
 \item If $M$ admits a strictly CR-plurisubharmonic function on $(0,q_0)$-forms, then the complex Green operator $G_q$ exists and  is continuous on $L^2_{0,q}(M)$ for degrees $q_0\le q\le n-q_0$. In the case
 that $q_0=1$, we also establish continuity for $G_0$  and $G_n$. Additionally, 
 the $\dbarb$-equation on $M$ can be solved in $C^\infty(M)$.
 \item If $M$ satisfies ``a weak compactness property"  on $(0,q_0)$-forms, then $G_q$ is a continuous operator on $H^s_{0,q}(M)$ and is therefore globally regular on $M$ for degrees $q_0\le q\le n-q_0$;  
 and also for the top degrees  $q=0$ and $q=n$ in the case $q_0=1$. 
\end{enumerate}
We also introduce the notion of a ``plurisubharmonic CR manifold" and show that it generalizes the notion of ``plurisubharmonic defining function" for a a domain in $\C^N$ and implies
that $M$ satisfies the weak compactness property.
\end{abstract} 

\section{Introduction}
\Label{s1}

Let $M$ be an abstract compact smooth CR manifold of real dimension $2n+1$ equipped with a Cauchy-Riemann structure $T^{1,0}M$. The tangential Cauchy-Riemann operator $\dbarb$ is well-defined on smooth $(p,q)$-forms
and our interest is to understand the regularity of the canonical solution $u$ to the $\dbarb$-equation, $\dbarb u =\vp$, 
i.e., the solution of the $\dbarb$-equation that is orthogonal to the kernel of $\dbar_b$. Our approach is via $L^2$-methods and we obtain the canonical solution to the $\dbarb$-equation by solving
the related $\Boxb$-equation $\Boxb u=f$ where the Kohn Laplacian $\Boxb = \dbarb\dbarbs+\dbarbs\dbarb$ and $\dbarbs$ is the $L^2$-adjoint of $\dbarb$ in an appropriate $L^2$-space. We provide the technical details
below. The operator $\Boxb$ maps $(0,q)$-forms to $(0,q)$-forms and its inverse on $(0,q)$-forms (when it exists) is called the \emph{complex Green operator} and denote $G_q$. Our primary goal in this paper to 
find sufficient conditions for the global regularity and exact regularity of $G_q$ and related operators. Global regularity means that $G_q$ maps smooth forms to smooth forms, and exact regularity means that
$G_q$ is continuous on all of the $L^2$-Sobolev spaces $H^s_{0,q}(M)$, $s\geq 0$. Before we can investigate the exact and global regularity questions, we must generalize the existing $L^2$-theory for solving
the $\Boxb$ and $\dbarb$-equations in $L^2_{0,q}(M)$. Additionally, we define a family of weighted Kohn Laplacians and show that for a given index $s \geq 0$, there are weighted complex Green operators whose
inverses are continuous on $H^s_{0,q}(M)$.

\subsection{History -- the $L^2$-theory of $\dbarb$ on CR manifolds}
A classical result of H\"ormander is that solvability of an operator (e.g., $\dbarb$, $\Boxb$, etc.) is equivalent to it having closed range. Solvability is closely linked to geometric and potential theoretic conditions
on the manifold $M$. Curvature, for example, is measured by the Levi form. The Levi form on boundaries of domains in $\C^{n+1}$ is essentially the complex analysis analog of the second fundamental form, and
when it is nonnegative, $M$ is called pseudoconvex.
The first results establishing closed range of $\dbarb$ in $L^2_{0,q}(M)$ occur when $M\subset\C^{n+1}$, $n\ge 1$, is pseudoconvex. 
Shaw \cite{Sha85} proves that $\dbarb$ has closed range  for
$0 \leq q \leq n-1$ when $M$ is the boundary of a pseudoconvex domain. With Boas, Shaw supplements her earlier result and establishes the result in the top degree $q=n$ \cite{BoSh86}. 
Using a microlocal argument, and in fact, developing the microlocal machinery, 
Kohn proves that if $M$ is compact, pseudoconvex, and the boundary of a smooth complex manifold which admits a strictly plurisubharmonic function defined in a neighborhood of $M$, 
then $\dib_b$ has closed range \cite{Koh86}.  In \cite{Nic06} Nicoara extends the Kohn microlocal machine and proves that on an embedded, compact, pseudoconvex-oriented CR manifold of dimension $(2n+1)\geq5$, 
$\dbarb$ has closed range and can be solved in $C^\infty$.

When pseudoconvexity is relaxed, Harrington and Raich prove closed range and related estimates at the level of $(0,q)$-forms for a fixed $q$, $1 \leq q \leq n-1$,  
by developing a weak $Y(q)$ condition that is much weaker than pseudoconvexity but still suffices \cite{HaRa11, HaRa15}. They prove
similar results to those of Nicoara \cite{Nic06}. Harrington
and Raich work on embedded manifolds in \cite{HaRa11} and in a Stein manifold in \cite{HaRa15}. In \cite{HaRa15}, they adopt Shaw's techniques to work on manifolds of minimal smoothness.
In all of the above cases, 
a crucial property of the manifolds is the existence a strictly CR plurisubharmonic function in a neighborhood of $M$. Indeed, our hypotheses in Theorem \ref{thm:L^2-theory} below include the existence of such a function, and 
our main task in proving Theorem \ref{thm:L^2-theory} is to show that the arguments from the previous works hold in the generality in which we work. The main focus of this paper, however, is the global and exact regularity statements.

\subsection{History -- global regularity for $G_q$}
An operator is \emph{globally regular} if it preserves $C^\infty$. Determining necessary and sufficient conditions for the global regularity of the complex Green operator (and $\dbar$-Neumann operator) is a one of the major
questions in the $L^2$-theory of the tangential Cauchy-Riemann operators.  
On compact manifolds, it is clear that if $G_q$ preserve $C^\infty$ locally, then it will preserve $C^\infty$ globally. However,
local regularity of $G_q$ requires  strong hypotheses, e.g., subelliptic estimates \cite{Koh85}, 
subelliptic estimates with multipliers \cite{Koh00, BaPiZa15}, or superlogarithmic estimates \cite{Koh02, KhZa11}.  
In parallel with this work \cite{Kha18}, the first author introduces a new potential theoretical condition named the $\sigma$-superlogarithmic property 
to prove the local regularity of $G_q$. However, this condition is far stronger than necessary to imply global regularity.

The techniques to prove global regularity do 
so by showing that $G_q$ is \emph{exactly regular}, meaning that
$G_q$ is a bounded operator from $H^s_{0,q}(M)$ to itself. 
There are several known conditions that suffice to prove exact (and hence global) regularity. 
The first is that  $M$ is the boundary of a domain that admits a plurisubharmonic
defining function \cite{BoSt91}. 
The second is when $G_q$ is a compact operator.   $G_q$ is known to be a compact operator by Raich \cite{Rai10} when 
$M$ is a smooth, orientable, pseudoconvex CR manifold of hypersurface type of real dimension at least five that satisfies a pair of potential-theoretical conditions that he names properties  $(CR\T-P_q)$ and $(CR\T-P_{n-1-q})$.  
Straube \cite{Str12} improves Raich's result by shedding the orientability hypothesis and showing that $(CR\T-P_q)$ is equivalent to $(P_q)$ (see \cite{Str08} for details and background on $(P_q)$). 
Pinton, Zampieri, and Khanh further reduce the hypotheses by removing the embeddability requirement so that
$M$ is an abstract CR manifold \cite{KhPiZa12a}. They also
prove a (real) dimension 3 compactness result under the additional assumption that $\dbarb$ has closed range on functions.
The third known condition for global and exact regularity of $G_q$ is the existence of special 1-form that is exact 
on the null-space of the Levi form. Straube and Zeytuncu prove global regularity for $G_q$ on a smooth, oriented, compact, pseudoconvex CR submanifold in $\C^n$ under this hypothesis \cite{StZe15}.

The genesis of the three known sufficient conditions is that they, or their analogs, are sufficient conditions for the global/exact regularity of the $\dbar$-Neumann operator on domains
in $\C^n$. See  Boas and Straube \cite{BoSt99} for a discussion of the early history of the global regularity question and Straube \cite{Str08} or Harrington \cite{Har11} for recent work.

\subsection{Main Goal and Major Results}
The main goal of this paper is to prove a global regularity result which encompasses and improves on all of the existing work. In particular, our condition is weaker than Straube and Zeytuncu's in two respects: 1) we do not require
exactness of the special 1-form $\alpha$, only a bound in the spirit of \cite{Str08}, and 2) we no longer require $M$ to be embedded. We state the main result below but defer the technical definitions to 
Section \ref{sec:definitions}, though we do need the form $\gamma$ that is dual to the ``bad" direction $T$, that is, the dual to the vector that is orthogonal to the CR structure of the manifold. 
Also $d\gamma$ is connected to the Levi form and $\L_{\lambda}$ is the analog of the complex Hessian for the function $\lambda$.
 \bt \label{thm:global regularity}
Let $M$ be a smooth, compact, pseudoconvex-oriented CR manifold of dimension $2n+1$ which admits a CR plurisubharmonic function on $(0,q_0)$-forms, $n\geq 2$. 
 Assume that for every $\eps>0$ there exist a $C^\infty$ real valued function $\lambda_\eps$, a purely imaginary vector field $T_\eps$, and a constant $A_\eps>0$  so that $|\lambda_\eps|$,
$\gamma(T_\eps)$ are uniformly bounded, $\gamma(T_\eps)$ is bounded away from zero, and
\begin{equation}\label{eqn:1/eps est}
\la(\L_{\lambda_\eps}+A_\eps d\gamma)\lrcorner u,\bar u\ra\ge \frac{1}{\eps}|\alpha_\eps|^2|u|^2
\end{equation}
for any $(0,q_0)$-forms $u$. The form $\alpha_\eps$ is real and defined by $\alpha_\eps=-\T{\{Lie\}}_{T_\eps}(\gamma).$

If $q_0\le q\le n-q_0$,   then 
the operators $G_q$, $\dib_bG_q$, $G_q\dib_b$, $\dib_b^*G_q$, $G_q\dib_b^*$, $I-\dib_b^*\dib_bG_q$, $I-\dib_b^*G_q\dib_b$, $I-\dib_b^*\dib_bG_q$, 
$I-\dib_bG_q\dib_b^*$, $\dib_b^*G^2_q\dib_b$ and $\dib_bG_q^2\dib_b^*$ are both globally regular and exactly regular in the $L^2$-Sobolev space $H^s$, $s\geq 0$. 
In the case $q=1$, the operators $G_0 = \dbarbs G_1^2 \dbarb$ and $G_n = \dbarb G_{n-1}^2 \dbarbs$ 
and hence $G_0$, $\dib_bG_0$, $G_n$, $\dib_b^*G_n$ are both globally regular and exactly regular in $H^s$ as well, $s\geq 0$.
\et
\begin{remark}
The pseudoconvex-oriented is a necessary condition for working on global abstract CR manifolds containing open Levi flat sets in the sense that on such a Levi flat set we can choose local contact forms 
whose Levi forms are locally pseudoconvex at every point at which they are defined, but there does not exist a global contact form whose Levi form is globally pseudoconvex.
\end{remark}

   The hypothesis of Theorem~\ref{thm:global regularity} is weaker than property $(CR\T-P_q)$ (introduced by Raich \cite{Rai10}).
   Furthermore, if $\alpha=:-\T{\{Lie\}}_{T}(\gamma)$  is exact on the null-space of the Levi form then  Straube and Zeytuncu \cite[Proposition 1]{StZe15} prove that for each $\eps>0$ 
   there exists  $T_\eps$  such that $\alpha_\eps = -\T{\{Lie\}}_{T_\eps}(\gamma)$ satisfies 
   $|\alpha_\eps|\le\eps$. So if we choose $\lambda_\eps:=\lambda$ is a strictly CR-plurisubharmonic with associated constant $A_\eps=1$ as  in \eqref{CRpluri} below,  then \eqref{eqn:1/eps est} is satisfied for all
   $\eps>0$, and hence the hypotheses of Theorem~\ref{thm:global regularity} holds. The following two corollaries are proven using the arguments of \cite{StZe15}.
 \begin{corollary}\label{cor:CRpsh on (0,1)}
    	Let $M$ be a smooth, compact, pseudoconvex-oriented CR manifold of dimension $2n+1$ which admits a CR plurisubharmonic function on $(0,1)$-forms, $n\geq 2$. 
	If the real 1-form $\alpha=-\{\T{Lie}\}_{T}(\gamma)$ is exact on the null space of the Levi form, then the hypothesis in Theorem~\ref{thm:global regularity} holds for $(0,1)$-forms.
    	\end{corollary}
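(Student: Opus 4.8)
The plan is to verify the hypotheses of Theorem~\ref{thm:global regularity} in the case $q_0=1$ by exhibiting explicit choices of $\lambda_\eps$, $T_\eps$, $A_\eps$, with the Straube--Zeytuncu perturbation result \cite[Proposition~1]{StZe15} used as a black box. First I would unpack \eqref{CRpluri}: the hypothesis that $M$ admits a CR-plurisubharmonic function on $(0,1)$-forms supplies a fixed real $\lambda\in C^\infty(M)$ and a fixed constant $A>0$ with $\la(\L_\lambda+A\,d\gamma)\lrcorner u,\bar u\ra\ge|u|^2$ for all $(0,1)$-forms $u$ (after rescaling $\lambda$ one may even take $A=1$, as remarked above). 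The key structural point is that $\L_\lambda$ and $d\gamma$ are built only from $\lambda$, the contact form $\gamma$, and the CR structure, so they do not depend on the choice of transverse field $T$; hence if I set $\lambda_\eps:=\lambda$ and $A_\eps:=A$ \emph{independently of} $\eps$, the left-hand side of \eqref{eqn:1/eps est} is an $\eps$-independent quantity bounded below by $|u|^2$, and $|\lambda_\eps|=|\lambda|$ is automatically uniformly bounded since $M$ is compact.

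Next I would invoke \cite[Proposition~1]{StZe15}: because $\alpha=-\{\T{Lie}\}_T(\gamma)$ is exact on the null space of the Levi form, for every $\delta>0$ there is a purely imaginary vector field $T_\delta$, obtained from $T$ by adding a field tangent to $T^{1,0}M\oplus T^{0,1}M$ (so that $\gamma(T_\delta)=\gamma(T)$ is unchanged), with $\big|{-}\{\T{Lie}\}_{T_\delta}(\gamma)\big|\le\delta$. Given $\eps>0$ I would then take $\lambda_\eps:=\lambda$, $A_\eps:=A$, and $T_\eps:=T_{\sqrt{\eps}}$, so that $\alpha_\eps=-\{\T{Lie}\}_{T_\eps}(\gamma)$ obeys $|\alpha_\eps|^2\le\eps$. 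Then for any $(0,1)$-form $u$,
\[
\la(\L_{\lambda_\eps}+A_\eps\,d\gamma)\lrcorner u,\bar u\ra\;\ge\;|u|^2\;=\;\frac1\eps\,\eps\,|u|^2\;\ge\;\frac1\eps|\alpha_\eps|^2|u|^2,
\]
which is exactly \eqref{eqn:1/eps est}; moreover $\gamma(T_\eps)=\gamma(T)$ is both uniformly bounded and bounded away from zero. This establishes all the hypotheses of Theorem~\ref{thm:global regularity} for $(0,1)$-forms.

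I do not anticipate a genuine obstacle: the substantive construction is entirely contained in \cite[Proposition~1]{StZe15}, and what remains is the bookkeeping needed to meet all the uniform bounds of Theorem~\ref{thm:global regularity} simultaneously. The only points that deserve a sentence of care are (i) that $\lambda_\eps$ and $A_\eps$ can be chosen independently of $\eps$, so the left side of \eqref{eqn:1/eps est} does not degenerate as $\eps\to0$ and no uniform bound on the constant $A_\eps$ is even needed (the theorem asks only that $|\lambda_\eps|$ and $\gamma(T_\eps)$ be uniformly bounded), and (ii) that the Straube--Zeytuncu vector field $T_\eps$ differs from $T$ by a field tangent to the CR structure, on which $\gamma$ vanishes, so that $\gamma(T_\eps)=\gamma(T)$ remains a fixed nonzero constant. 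With these observations the verification is immediate.
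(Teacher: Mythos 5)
Your proposal is correct and follows essentially the same route as the paper: the authors also fix $\lambda_\eps:=\lambda$ and $A_\eps$ independent of $\eps$, and invoke \cite[Proposition 1]{StZe15} to produce $T_\eps$ with $|\alpha_\eps|$ small enough that \eqref{eqn:1/eps est} reduces to the fixed lower bound \eqref{CRpluri}. Your added remarks — that $\L_\lambda$ and $d\gamma\lrcorner$ on $(0,1)$-forms do not depend on the transverse field, and that $\gamma(T_\eps)=\gamma(T)$ because the perturbation lies in $T^{1,0}M\oplus T^{0,1}M$ — are correct details the paper leaves implicit.
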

 \begin{corollary}	Let $M$ be a smooth, compact, pseudoconvex-oriented CR manifold of dimension $2n+1$ which admits a CR plurisubharmonic function on $(0,1)$-forms, $n\geq 2$. 
   	Let $S$ be the set of non-strictly pseudoconvex points. Suppose that 
   	at each point of $S$, the (real) tangent space is contained in the null space of
   	the Levi form at the point. If the first de Rham cohomology of $S$ is
   	trivial, then $\alpha$ is exact on the null space of the Levi form (this happens if $S$ is simply connected). Consequently, the
   	hypotheses of Corollary \ref{cor:CRpsh on (0,1)} hold.
   \end{corollary}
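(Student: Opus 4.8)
The plan is to verify the single hypothesis of Corollary~\ref{cor:CRpsh on (0,1)}, namely that $\alpha=-\{\T{Lie}\}_{T}(\gamma)$ is exact on the null space of the Levi form; the conclusion of that corollary then follows at once. At every point off $S$ the Levi form is positive definite and its null space is $\{0\}$, so there is nothing to check there, and the entire argument is carried out on $S$, which is compact and closed in $M$ because strict pseudoconvexity is an open condition.

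First I would record the relevant Cartan calculus. With the normalization that $\gamma(T)$ is constant (part of the setup of Section~\ref{sec:definitions}), Cartan's formula gives $\{\T{Lie}\}_{T}(\gamma)=d(\iota_T\gamma)+\iota_T\,d\gamma=\iota_T\,d\gamma$, hence $\alpha=-\iota_T\,d\gamma$ and $d\alpha=-d\,\iota_T\,d\gamma=-\{\T{Lie}\}_{T}(d\gamma)$. Since $\gamma$ annihilates $T^{1,0}M\oplus T^{0,1}M$ and $T^{1,0}M$ is integrable, $d\gamma$ vanishes on $T^{1,0}M\times T^{1,0}M$ and on $T^{0,1}M\times T^{0,1}M$ and restricts to a constant multiple of the Levi form on $T^{1,0}M\times T^{0,1}M$; in particular $d\gamma(Z,W)=0$ as soon as one of $Z,W$ lies in the null space of the Levi form and the other is a horizontal (CR) vector. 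Feeding this, together with the elementary bracket identity $\gamma([T,X])=\alpha(X)$ valid for horizontal $X$, into $\{\T{Lie}\}_{T}(d\gamma)(X,Y)=T\big(d\gamma(X,Y)\big)-d\gamma([T,X],Y)-d\gamma(X,[T,Y])$, the two terms carrying the factor $T$ produce $\alpha(X)\alpha(Y)$ with opposite signs and cancel while the first term vanishes, which gives $d\alpha(X,Y)=0$ for all $X,Y$ in the null space of the Levi form.

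With this identity in hand, the degeneracy hypothesis --- that $T_pS$ is contained in the null space of the Levi form at each $p\in S$ --- shows that $\iota_S^*(d\alpha)=d(\iota_S^*\alpha)=0$, so the pullback of $\alpha$ to $S$ is a closed $1$-form. Because $H^1(S;\R)=0$ (which holds, for instance, when $S$ is simply connected), there is $h_0\in C^\infty(S)$ with $\iota_S^*\alpha=dh_0$. I would then extend $h_0$ to $h\in C^\infty(M)$ by the Whitney extension theorem, legitimate since $S$ is closed in the compact manifold $M$; for this $h$ the form $\alpha-dh$ annihilates $T_pS$ at each $p\in S$ and, trivially, the null space of the Levi form at each point off $S$. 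Passing from here to the statement that $\alpha-dh$ annihilates the entire null space of the Levi form at every point --- i.e.\ that $\alpha$ is exact on the null space of the Levi form in the sense required by Corollary~\ref{cor:CRpsh on (0,1)} --- is precisely the bookkeeping of \cite[Proposition~1]{StZe15}, which one then invokes; this is where the hypothesis that $T_pS$ lies in the null space of the Levi form on $S$, together with the vanishing of that null space off $S$, is used essentially. Corollary~\ref{cor:CRpsh on (0,1)} then applies, and with it the hypotheses of Theorem~\ref{thm:global regularity} for $(0,1)$-forms.

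The routine ingredients are the Cartan identities, the de Rham step, and the smooth extension of $h_0$. The main obstacles I anticipate are: (i) justifying $d\alpha\equiv 0$ on the null space of the Levi form as a pointwise identity even where that null space fails to be a subbundle --- one proves it by extending $X,Y$ as sections of the kernel on the open set where its rank is locally constant and concluding at the remaining points by continuity; and (ii) the passage from exactness of $\iota_S^*\alpha$ on $S$ to exactness on every Levi-null direction, which requires controlling the directions in the null space of the Levi form that are not tangent to $S$. I expect (ii) to be the genuinely delicate point; the remainder follows the scheme of \cite{StZe15}.
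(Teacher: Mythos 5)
Your proposal follows the same scheme as the paper's proof, which is essentially a restatement of \cite[Theorem 3]{StZe15}: establish $d\alpha|_x(X\wedge Y)=0$ for $X,Y\in \mathcal N_x\oplus\overline{\mathcal N_x}$, use the hypothesis $T_pS\subset\mathcal N_p\oplus\overline{\mathcal N_p}$ to conclude that the pullback of $\alpha$ to $S$ is closed, invoke $H^1_{dR}(S)=0$ to produce $h$ with $\iota_S^*\alpha = dh$ on $S$, and then conclude that $\alpha$ is exact on the null space of the Levi form so that Corollary~\ref{cor:CRpsh on (0,1)} applies. Where the paper simply cites the lemma of \cite[p.~230]{BoSt93} for the first step, you spell out the Cartan calculus explicitly, and that calculation is correct (noting, as you do, that the simplification $\{\T{Lie}\}_T\gamma=\iota_T\,d\gamma$ uses $\gamma(T)\equiv$ const and that $d\gamma$ restricts to the Levi form on horizontal directions). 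Your item (i), handling points where the null space fails to be a subbundle, is a fair precision that the paper does not belabor.

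Two comments on the end of the argument. First, you identify as the delicate point (ii) the passage from ``$\alpha=dh$ on $T_pS$'' to ``$\alpha=dh$ on $\mathcal N_p\cap T^{1,0}_p$,'' since the hypothesis gives only $T_pS\subset\mathcal N_p\oplus\overline{\mathcal N_p}$ and not the reverse inclusion. This is a legitimate observation; the paper's own proof elides it with the single sentence ``This mean[s], $\alpha$ is exact on $\mathcal N_x$,'' and defers the details to \cite{StZe15}. However, the reference you give for the bookkeeping is off: \cite[Proposition~1]{StZe15} is the result that converts exactness of $\alpha$ on the null space into the family of vector fields $T_\eps$ with $|\alpha_\eps|\le\eps$ --- it is what underlies Corollary~\ref{cor:CRpsh on (0,1)}, not what justifies the exactness step itself. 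The result you want to lean on here is \cite[Theorem~3]{StZe15}, which the paper cites. Second, a small point in your favor: you explicitly extend $h_0$ from $S$ to all of $M$, which is what the definition of ``exact on the null space'' (a function defined in a neighborhood of $K$) actually requires; the paper's phrase ``$h$ on $C^\infty(\overline S)$'' leaves this implicit.
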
  
   \begin{proof} The proof follows exactly from \cite[Theorem 3]{StZe15}. Let $\N_x$ be the null space of the Levi form at $x$.
   	 Analogously to Lemma~\cite[Lemma on p. 230]{BoSt93}, 
	 we have $(d\alpha|_x)(X\we Y)=0$ if $X,Y\in \N_x\oplus \overline{\N_x}$. Thus, $d\alpha=0$ on $S$. 
	 Since the first DeRham cohomology of $S$ is trivial,  there exists $h$ on $C^\infty(\overline{S})$ such that such that $\alpha=dh$ on $S$. This mean, $\alpha$ is exact on $\N_x$. 
   	   \end{proof}

   Also in \cite{StZe15}, the authors show that if the defining functions of $M$ are plurisubharmonic in some neighborhood of $M$ 
   then $\alpha$ is exact on the null space of the Levi form. Their proof strongly relies on the fact that $M$ is embedded in $\C^n$, however, we are able to remove the embeddability assumption.
  \bt\label{thm:CRpsh (0,1)}
    Let $M$ be a smooth, compact, plurisubharmonic-oriented CR manifold of dimension $2n+1$ that admits a CR plurisubharmonic function on $(0,1)$-forms. 
    Then for any $\eps>0$ there exists a vector $T_\eps$ whose length is bounded and bounded away from $0$ with associated form $\alpha_\eps = -\T{\{Lie\}}_{T_\eps}(\gamma)$ satisfying
    $|\alpha_\eps|<\eps$. Consequently, the conclusion of Theorem \ref{thm:global regularity} holds for all $0 \leq q \leq n$.
    \et

The secondary goal of this paper is to restate in a slightly more general form the $L^2$-theory for $\dbarb$ without the embeddability assumption.
%
%
\begin{theorem}\label{thm:L^2-theory}
Let $M$ be a smooth, compact, pseudoconvex-oriented CR manifold of dimension $2n+1$ that admits a strictly CR-plurisubharmonic function on $(0,q_0)$-forms. If $q_0 \leq q \leq n-q_0$ then the following hold:
\begin{enumerate}\renewcommand{\labelenumi}{(\roman{enumi})}
	\item The $L^2$ basic estimate 
	\begin{equation}
	\label{L2basic} \no{u}_{L^2}^2\le c(\no{\dib_bu}_{L^2}^2+\no{\dib_b^*u}_{L^2}^2)
	\end{equation}
	holds for all $u\in \T{Dom}(\dib_b)\cap\T{Dom}(\dib_b)\cap \H^{\perp}_{0,q}(M)$.
\item The operators
$\dbarb: L^2_{0,\tilde q}(M)\to L^2_{0,\tilde q+1}(M)$
and
$\dbarbs: L^2_{0,\tilde q+1}(M)\to L^2_{0,\tilde q}(M)$
have closed range when $\tilde q = q$ or $q-1$.
Additionally, $\Box_b:L^2_{0,q}(M)\to L^2_{0,q}(M)$ has closed range;
\item The operators $G_q$, $\dbarbs G_q$, $G_q\dbarbs$, $\dbarb G_q$, $G_q\dbarb$, $I - \dbarbs \dbarb G_q$, $I - \dbarbs G_q \dbarb$, $I - \dbarb \dbarbs G_q$, $I - \dbarb G_q \dbarbs$ are $L^2$ bounded. In the case $q=1$, the operators $G_0 = \dbarbs G_1^2 \dbarb$ and $G_n = \dbarb G_{n-1}^2 \dbarbs$ and hence $G_0$, $\dib_bG_0$, $G_n$, $\dib_b^*G_n$ are continuous on  $L^2$. 
\item The $\dib_b$-equation $\dib_bu=\varphi$ has a solution $u\in C^\infty_{0,\tilde q-1}(M)$ for $\tilde q=q$ or $q+1$ if $\varphi$ is a $\dib_b$-closed, $C^\infty$-smooth $(0,\tilde q)$-form.
\item The space of harmonic forms $\mathcal H_{0,q}(M)$ is finite dimensional.
\end{enumerate}
\end{theorem}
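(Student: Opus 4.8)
The plan is to prove Theorem \ref{thm:L^2-theory} by reducing each assertion to the $L^2$ basic estimate \eqref{L2basic}, which is itself the key object to establish, and then running the standard functional-analytic machinery (Hörmander's formalism together with the Kohn--Morrey--Hörmander identity, adapted to the CR setting as in Nicoara \cite{Nic06} and Harrington--Raich \cite{HaRa11, HaRa15}). First I would record the microlocal decomposition: cover $M$ by coordinate patches, use a partition of unity, and in each patch split the relevant forms into their $\Psi^+$, $\Psi^-$, and $\Psi^0$ pieces by microlocalizing in the characteristic directions of $\Boxb$. On the $\Psi^+$ piece (where the Levi form has the ``wrong" sign relative to $q$ for $q \le n-q_0$... actually the favorable sign) one uses the positivity coming from pseudoconvexity together with the strictly CR-plurisubharmonic function on $(0,q_0)$-forms — which supplies, after the twisting/weighting trick of \cite{McN, StZe15}-type arguments, a genuinely positive lower bound for the quadratic form $Q_b(u,u) = \|\dbarb u\|^2 + \|\dbarbs u\|^2$; on the $\Psi^-$ piece one argues dually (this is where the range $q \ge q_0$ is used, and the endpoint cases $q=0,n$ when $q_0=1$ require the extra observation that $\dbarbs$ or $\dbarb$ trivially vanishes); and the $\Psi^0$ piece is elliptic and controlled by a compact error that can be absorbed. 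The main obstacle is exactly this step — verifying that the microlocal estimates and the twisting argument, previously carried out on embedded manifolds or manifolds in a Stein manifold, go through verbatim when $M$ is merely an abstract compact pseudoconvex-oriented CR manifold equipped with a strictly CR-plurisubharmonic function on $(0,q_0)$-forms; the point is that none of the prior arguments actually used the embedding except to produce the plurisubharmonic function, which we are now hypothesizing directly.

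Given (i), part (ii) is immediate functional analysis: the basic estimate \eqref{L2basic} on $\H_{0,q}^\perp$ says $Q_b$ is coercive on the orthogonal complement of the (a priori possibly infinite-dimensional) harmonic space, and a standard argument (e.g. \cite{Str08} Chapter on closed range, or Hörmander's lemma) upgrades this to closed range for $\dbarb$ and $\dbarbs$ at levels $\tilde q = q, q-1$ and for $\Boxb$ at level $q$ — one also gets that $\H_{0,q}(M)$ is finite dimensional, which is part (v), because the unit ball of a space on which the compact error dominates must be compact. Then (iii) follows from the standard identities: $G_q$ is the bounded inverse of $\Boxb$ on $\H_{0,q}^\perp$ extended by zero on $\H_{0,q}$, and the various composed operators $\dbarb G_q$, $G_q \dbarbs$, $I - \dbarbs\dbarb G_q$, etc., are bounded by \eqref{L2basic} applied to $G_q u$ together with $\Boxb G_q = I - (\text{harmonic projection})$; the formulas $G_0 = \dbarbs G_1^2 \dbarb$ and $G_n = \dbarb G_{n-1}^2 \dbarbs$ in the case $q_0 = 1$ are verified by a direct computation using $\Boxb$ on functions (where $\dbarbs = 0$) and the commutation $\dbarb \Boxb = \Boxb \dbarb$, exactly as in \cite{HaRa15}. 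Finally (iv): given a $\dbarb$-closed smooth $(0,\tilde q)$-form $\varphi$, the form $u = \dbarbs G_{\tilde q}\varphi$ solves $\dbarb u = \varphi$ in $L^2$ by the Hodge-type decomposition (using closed range and $\varphi \perp \H_{0,\tilde q}$, the latter holding automatically since harmonic forms are $\dbarb$-closed and $\dbarbs$-closed); smoothness of $u$ then follows either from interior elliptic-type regularity of $\Boxb$ combined with the global regularity supplied by the stronger hypothesis — but to stay within Theorem \ref{thm:L^2-theory}'s hypotheses one instead invokes the standard observation of Kohn/Nicoara that on a compact manifold $C^\infty$-smooth data can be handled by a limiting/elliptic-regularization argument, or one simply defers it to the exact-regularity statement of Theorem \ref{thm:global regularity} when that hypothesis holds; in the generality of Theorem \ref{thm:L^2-theory} alone, the cleanest route is to note $\varphi \in \bigcap_s H^s$ and use the a priori estimate $\|G_{\tilde q}\varphi\|_{H^{s+1}} \lesssim \|\varphi\|_{H^s} + \|G_{\tilde q}\varphi\|_{L^2}$ coming from the subelliptic-in-the-good-directions / elliptic-in-$\Psi^0$ gain, bootstrapping in $s$.

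I expect the bulk of the writing to be the careful bookkeeping in step (i) — setting up the pseudodifferential calculus on the abstract manifold, the weighted inner products, and tracking that every error term is either absorbable or compact — while steps (ii)--(v) should each be short once (i) is in hand. A secondary subtlety worth flagging in the write-up is the precise role of "pseudoconvex-\emph{oriented}" (as opposed to pseudoconvex): the orientation is what lets us choose the global contact form $\gamma$ and hence globally define the splitting into $\Psi^\pm$ with consistent signs, so it should be invoked explicitly at the start of the microlocal argument. Throughout, I would cite \cite{Nic06, HaRa11, HaRa15} for the analogous computations and emphasize that the only new content is the replacement of the embedding hypothesis by the abstract existence of the strictly CR-plurisubharmonic function on $(0,q_0)$-forms.
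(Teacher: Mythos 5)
Your overall outline for (i)--(iii) and (v) matches the paper's route (microlocal decomposition into $\Psi^\pm, \Psi^0$ pieces, Kohn--Morrey--H\"ormander plus sharp G{\aa}rding for the $T$-terms, weighted inner products built from $t\lambda$, then H\"ormander-style functional analysis), and you correctly identify that the only role of embeddability in \cite{Nic06,HaRa11} was to manufacture the plurisubharmonic function that is now assumed. One presentational caveat: you cannot literally ``prove (i) first'' since \eqref{L2basic} is stated on $\H_{0,q}^\perp$ and one must first obtain an estimate with a compact $\|u\|_{H^{-1}}^2$ error on all of $\T{Dom}(\dbarb)\cap\T{Dom}(\dbarbs)$, deduce that $\H_{0,q}$ is finite dimensional and that the range is closed, and only then drop the compact term on $\H_{0,q}^\perp$; you clearly know this (your remark about the unit ball) but the stated order is circular as written. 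More substantively, the paper does not work with the unweighted quadratic form at all until the very end: it proves a full weighted $H^s$ theory (Theorem~\ref{thm:main theorem for weighted spaces}), with $\dbarb^{*,t}$ the adjoint in the microlocal $\la|\cdot|\ra_{t\lambda}$ norm, and then transfers $L^2$ closed range to the unweighted $\dbarb$ via the equivalence \eqref{eqn:norm equivalence}; $\dbarbs$ closed range and finite dimensionality of $\mathcal H_{0,q}(M)$ then follow by \cite[Theorem 1.1.1]{Hor65} and \cite[p.~772]{RaSt08}.

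The genuine gap is in your third (and preferred) route to (iv). The a priori estimate $\|G_{\tilde q}\varphi\|_{H^{s+1}} \lesssim \|\varphi\|_{H^s}+\|G_{\tilde q}\varphi\|_{L^2}$ with a one-derivative gain is simply not available under the hypotheses of Theorem~\ref{thm:L^2-theory}: that gain is local hypoellipticity/subellipticity of $\Boxb$, which the introduction explicitly flags as requiring much stronger assumptions (subelliptic or superlogarithmic estimates). Under a strictly CR-plurisubharmonic function alone, the unweighted $G_q$ is $L^2$-bounded and nothing more. The correct mechanism for (iv) --- and the reason the paper bothers to develop Theorem~\ref{thm:main theorem for weighted spaces} rather than just \eqref{L2basic} --- is the weighted estimate \eqref{Hs}: for each Sobolev index $s$ there is a threshold $T_s$ so that for $t\ge T_s$ the weighted system $(\dbarb,\dbarb^{*,t})$ is coercive in $H^s$, hence $G_{q,t}$ is continuous on $H^s$ (no gain, just boundedness) and $\dbarb^{*,t}G_{\tilde q,t}\varphi\in H^s$ for smooth $\varphi$. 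Parts (ix)--(x) of Theorem~\ref{thm:main theorem for weighted spaces} then assemble these across $s$ to produce a single $C^\infty$ solution. Your alternative ``elliptic-regularization/limiting'' sketch is closer in spirit, but as stated it conflates the $\delta\to 0$ regularization used in Section~\ref{s4} (for global regularity under the stronger Theorem~\ref{thm:global regularity} hypothesis) with the $t\to\infty$ weighted argument that actually proves (iv) here; you need the latter, and it needs to be spelled out.
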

\begin{remark} There is little new work to be done to prove Theorem \ref{thm:L^2-theory}. Our hypotheses are exactly what Nicoara \cite{Nic06} and Harrington and Raich \cite{HaRa11} use. 
In their work, embeddability is only used to establish 
the existence of a strictly CR plurisubharmonic function. We will simply highlight aspects of the earlier proofs that we need to prove Theorem \ref{thm:L^2-theory}.

Our main contribution is the development of an elliptic regularization that does not require $M$ to be embedded, whereas the previous methods strongly used the embeddedness.
\end{remark}

The outline of the rest of the paper is as follows. The technical preliminaries are given in Section \ref{sec:definitions}. The establishment of a basic estimate comprises the beginning of
Section \ref{sec:basic estimate}. The remainder of the section develops the microlocal framework that we use to prove the main theorem. We prove Theorem \ref{thm:L^2-theory} in Section \ref{sec:proving L^2 theory}. 
Its proof follows the argument of \cite{HaRa11} (and in \cite{Nic06}), and like \cite{HaRa11}, the microlocal argument
proves an auxiliary result on a carefully constructed weighted spaces, Theorem \ref{thm:main theorem for weighted spaces}. It is through the weighted spaces that we can solve $\dbarb$ in
$C^\infty$. We conclude the paper in Section \ref{sec:hypoellipticity} with proofs of Theorem \ref{thm:L^2-theory}
and Theorem \ref{thm:CRpsh (0,1)}.

%
%
\section{Definitions}\label{sec:definitions}
Let $M$ be a real smooth manifold of dimension $2n+1$, $n\ge 1$. Let $\C TM$ be the complexified tangent bundle over $M$, and $T^{1,0}M$ be a subbundle of $\C TM$. We say that $M$ is a 
\emph{CR manifold of hypersurface type} equipped with CR structure $T^{1,0}M$ (or CR manifold for short) if the following conditions are satisfied:
\begin{enumerate}
	\item[(i)] $\dim_\C T^{1,0}M=n$,
	\item[(ii)] $T^{1,0}M\cap T^{0,1}M=\{0\}$, where $T^{0,1}M=\overline{T^{1,0}M}$,
	\item[(iii)] for any $L, L'\in \Gamma(U, T^{1,0}M)$, the Lie bracket $[L,L']$ is still in $\Gamma(U, T^{1,0}M)$, where $U$ is any open set of $M$ and $\Gamma(U, T^{1,0}M)$ denotes the space of 
	smooth sections of $T^{1,0}M$ over $U$ (this condition is nonexistent when $n=1$).
\end{enumerate} 
On $M$, we choose a Riemann metric $\la\cdot,\cdot\ra$ which induces a Hermitian metric on $T^{1,0}M\oplus T^{0,1}M$
so that $\la L,\bar L'\ra=0$ for any $L,L'\in T^{1,0}M$.  Given the metric $\la\cdot,\cdot\ra$, we choose a local frame $\{L_1,\dots, L_n\}$ of $T^{1,0}M$ and a purely
imaginary vector field $T$ that is orthogonal (and hence transversal) to 
$T^{1,0}M\oplus T^{0,1}M$ so that $\{L_1,\dots,L_n,\bar L_1,\dots,\bar L_n, T\}$ forms an orthonormal basis of
$\C TM$. See \cite{Bog91} for details. 
Denote by $\omega_1,...,\omega_{n},\bar \om_1,...,\bar\om_{n},\gamma$ the dual basis of $1$-forms for $L_1,...,L_{n}$, $\bar L_1,...,\bar L_{n}$, and $T$. 
We call any open set $U \subset M$  a {\it local patch} if it admits such vectors and forms. 
Our interest is in global solvability, so we need a suitable partition of unity. We call a cover $\{U_\mu\}$ of $M$ a \emph{good cover} if each $U_\alpha$ is a local patch. We also let $\{\eta_\mu\}$ be a partition of
unity subordinate to $\{U_\mu\}$.\\

For a $C^2$ function $\phi$ on $M$, we call the alternating $(1,1)$-form $\L_\phi=\frac{1}{2}\big(\di_b\dib_b-\dib_b\di_b\big)\phi$ on $T^{1,0}M\times T^{0,1}M$ the 
\emph{Levi form of $\phi$}.
The Levi form $\mathcal L_M$ of $M$ at $x$ is the Hermitian form given by $d\gamma (L_x \wedge \bar L_x') = \gamma([\bar L_x',L_x])$  for $L,\,L'\in T^{1,0}_x$. We say that
$M$ is \emph{pseudoconvex at $x\in M$} if the Levi form is positive semidefinite in a neighborhood of $x$, i.e., $d\gamma(L\wedge\bar L)\geq 0$ for all $L\in T^{1,0}_x$. 
$M$ is \emph{orientable} if there exists is a global 1-form $\tilde \gamma$. We say that $M$ is \emph{pseudoconvex-oriented} if the $1$-form $\gamma \in \C TM^*$ is globally defined and 
the Levi form is positive semidefinite for all $x\in M$.
All of the manifolds that we consider in this paper will be pseudoconvex-oriented. 
We express $d\gamma$ in a local patch $U$ as 
\begin{equation}\label{dgamma}
	d\gamma=-\left(\sum_{i,j=1}^nc_{ij}\om_i\we\bom_j+\sum_{j=1}^nc_{0j}\gamma\we \om_j+\sum_{j=1}^n\bar c_{0j}\gamma\we \bom_j\right),\end{equation}
where the integrability condition of CR structure and the Cartan formula forces the coefficients of $\om_i\we \om_j$ and $\bom_j\we\bom_j$ to be zero. 
The pseudoconvex-oriented condition is equivalent to the Levi matrix  $\L_{M}:=\{c_{ij}\}_{ij=1}^n$ being positive semidefinite at every $x\in M$. 
We extend the $n\times n$ Levi matrix  to a $(n+1)\times (n+1)$ matrix $\{c_{ij}\}_{ij=0}^n$ 
with entries $c_{0j}$ for $j=1,\dots,n$, $c_{j0}=\overline{c_{0j}}$, and $c_{00}$ to be chosen. We say that $M$ is \emph{plurisubharmonic} at $x$ 
if there exists $c_{00}$ such that the extended Levi matrix $\{c_{ij}\}_{ij=0}^n\ge 0$ in a neighborhood of $x$ 
and $M$ is {\it plurisubharmonic-oriented}  if it is pseudoconvex-oriented and plurisubharmonic at every $x\in M$. 
It is obvious that if $M$ is embedded in a Stein manifold $X$ and admits a plurisubharmonic defining function $r$ then $M$ is plurisubharmonic-oriented. Indeed, in this case the plurisubharmonic-oriented condition is fulfilled 
if we choose  $\gamma=\frac{i}{2}(\di r-\dib r)$ and $c_{00}$ is the $T:=L_0-\bar L_0$ component of $[L_0,\bar L_0]$ where $L_0\in T^{1,0}X$ is the dual of $\di r$.   
Let $\alpha=\sum_{j=1}^n \left(c_{0j}\om_j+ \bar c_{0j}\bom_j\right)$ and observe that $\alpha = -\{\T{Lie}\}_T(\gamma)$. In \cite{StZe15}, the $(1,0)$-form $\alpha$ is called {\it exact on
the null space of the Levi form} if there exists a smooth function $h$, defined in
a neighborhood of $K$, the set of weakly pseudoconvex points of $M$,
such that
$$dh(L_z)(z) = \alpha(L_z)(z),\quad L_z\in \mathcal N_z\cap T^{1,0}_z, \quad z\in K,$$
where $\mathcal N_z$ is the null space of the Levi form at $z\in K$. Below, we will show that if $M$ is plurisubharmonic-oriented then $\alpha$ is exact on the null space of the Levi form.\\

Denote by $dV$ the element of volume on $M$, the induced $L_2$-inner product and norm on $ C^\infty_{p,q}(M)$ is defined by
$$
(u,v)=\int_M\la u, \bar v\ra dV,\qquad \no{u}_{L^2}^2=(u,u).
$$
The function space $L_{p,q}^2(M)$  is the Hilbert space obtained by completing $ C^\infty_{p,q}(M)$ under the $L^2$-norm. The Sobolev spaces $H^s_{p,q}(M)$ are obtained by completing
$ C^\infty_{p,q}(M)$ under the usual $H^s(M)$ norm,  $\no{\cdot}_{H^s}$, applied componentwise.
We now want to define $\dbarb$ on $(p,q)$-forms, extending our definition from functions.
Define the operator $\dib_b:  C^\infty_{p,q}(M)\to  C^\infty_{p,q+1}(M)$ to be the projection of the de Rham exterior differential operator $d$ to $ C^\infty_{p,q}(M)$. 
We denote by $\dib^*_b:\, C^\infty_{p,q+1}(M)\to  C^\infty_{p,q}(M)$ the $L^2$-adjoint of $\dib_b$ and define the Kohn-Laplacian by 
$$
\Box_b:=\dib_b\dib_b^*+\dib_b^*\dib_b: C^\infty_{p,q}(M) \to C^\infty_{p,q}(M).
$$ 
The space of harmonic $(p,q)$-forms $\mathcal H_{p,q}(M):=\ker\Box_{b}$ coincides with $\ker \dib_b\cap\ker\dib^*_b$. 
Our  result will be to find a sufficient condition so that the estimate
\begin{equation}
\label{1.1}
\no{u}_{L^2}\leq c(\no{\dib_b u}_{L^2}+\no{\dib^*_bu}_{L^2})\quad \T{for }u\in \T{Dom}(\dib_b)\cap Dom(\dib^*_b)\cap \mathcal H^\perp_{p,q},
\end{equation}
holds and $\mathcal H_{p,q}(M)$ is finite dimensional. 
A consequence of this estimate is that $\Box_{b}$  has a  bounded inverse on $\H^\perp_{p,q}(M)$. In this case, we extend the inverse to be identically $0$ on $\H_{p,q}(M)$.
\bd
In the case that $\Boxb$ is an invertible operator on $L^2_{p,q}(M)\cap\H^\perp_{p,q}(M)$, we denote the inverse by $G_{p,q}$ and call it the \emph{complex Green operator}.
\ed
The existence of a strictly CR plurisubharmonic function means that the curvature of $M$ does not play a factor in the existence of $G_{p,q}$, so it suffices to take $p=0$, and we denote $G_{0,q}$ by
$G_q$.

If $M$ is not embedded in a Stein manifold, we can define strictly CR plurisubharmonic functions as follows. 
\begin{definition} Let $M$ be a pseudoconvex CR manifold. A $C^\infty$ real-valued function $\lambda$ defined on $M$ is \emph{strictly CR plurisubharmonic on $(0,q)$-forms} if 
there exists a constant $a>0$ so that 
so that 
\begin{eqnarray}
\label{CRpluri}
\la \left(\L_\lambda+ d\gamma\right)\lrcorner u,\bar u\ra\ge a|u|^2,
\end{eqnarray}
for any $u\in  C^\infty_{0,q}(M)$. 
\end{definition}
For $u$ defined on $U$, the contraction operator $\lrcorner$ is defined by 
$$
\theta\lrcorner u=\sum_{I\in\I_{q-1}} \left(\sum_{j=1}^n \theta_j u_{jI}\right)\bom_I\quad\T{if $\theta=\sum_j\theta_j\,\om_j$ is a $(1,0)$-form on $U$};
$$ 
and
$$\quad\theta\lrcorner u=\sum_{I\in\I_{q-1}}\sum_{j=1}^n \left(\sum_{i=1}^n \theta_{ij} u_{iI}\right)\bom_j\we\bom_{I}\quad\T{if $\theta=\sum_{i,j=1}^n \theta_{ij}\,\om_i\we\bom_j$ is a $(1,1)$-form on $U$}.$$
Thus, the Levi form $d\gamma$ and $\L_\phi$ acting on $(0,q)$-forms $u,v$ defined in $U$ can be expressed as
$$\la d\gamma\lrcorner u,\bar v\ra=\sum_{I\in\I_{q-1}}\sum_{i,j=1}^nc_{ij}u_{iI}\overline{v_{jI}}\quad \T{and } \la \L_\phi\lrcorner u,\bar v\ra=\sum_{I\in\I_{q-1}}\sum_{i,j=1}^n\phi_{ij}u_{iI}\overline{v_{jI}}.$$

\begin{remark}
 	Strictly CR-plurisubharmonic functions always exist if $M$ is strictly pseudoconvex or embedded into a Stein manifold. They do not, however, always exist on abstract CR manifolds. 
	See, for example, Grauert's example \cite{Gra63}. 
 \end{remark}

\subsection{Working in local coordinates}\label{subsec:prelims for the basic est}
Let 
 $U$ be a local patch of $M$ with its associated basis of tangential vector fields
$L_1,...,L_{n},\bar L_1,...,\bar L_{n},T$ and dual basis
$\omega_1,...,\omega_{n},\bar\omega_1,...,\bar\omega_{n}, \gamma$. 
For the moment, we work locally on $U$. 

The  condition of pseudoconvexity of $M$ at $x$ is equivalent to the Levi matrix $\{c_{ij}\}_{i,j=1}^n\ge 0$ in a neighborhood of $x$. 
We recall that $M$ is {\it pseudoconvex-oriented} (resp., {\it plurisubharmonic-oriented}) if there exist a global 1-form section $\gamma$ 
(resp., a global 1-form section $\gamma$ and a smooth function $c_{00}$) such that the Levi matrix $\{c_{ij}\}_{ij=1}^n$ (resp. the extended Levi matrix $\{c_{ij}\}_{ij=0}^n$) is positive semidefinite for every $x\in M$.

We further define $c_{ij}^k$ to be the $L_k$-component of $[L_i,\bar L_j]$. Since $d$ applied to a $(1,0)$-form can produce a
$(2,0)$-form and a $(1,1)$-form \cite[\S8.2, Lemma 1]{Bog91}, it follows from the definition of $\dbarb$ that
\begin{equation}
\Label{31}
\begin{split}
c_{ij}^k:= \om_k([L_i,\bar L_j]) \underset{\T{Cartan}}=- \dbarb\om_k(L_i\wedge\bar L_j).
\end{split}
\end{equation}
Therefore,
\begin{equation}
\Label{32}
\dbarb\om_k=-\sum_{i,j=1}^nc_{ij}^k\, \om_i\wedge\bom_j.
\end{equation}
and conjugating yields
\begin{equation}
\Label{33}
\p_b\bom_k=\sum_{i,j=1}^n\overline{c_{ji}^k}\, \om_i\wedge\bom_j.
\end{equation}
Using Cartan's formula again, we conclude that $-\overline{c_{ji}^k}$ coincides with the $\bar L_k$-component of $[L_i,\bar L_j]$. Thus the full commutator is expressed by
\begin{equation}
\Label{34}
[L_i,\bar L_j]=c_{ij}T+\sum_{k=1}^n c_{ij}^kL_k - \sum_{k=1}^n\overline{ c_{ji}^k}\bar L_k.
\end{equation}
For a smooth function $\phi$ on $U$, we want to describe the matrix $(\phi_{ij})$ of the Hermitian form $\frac{1}{2}\big(\di_b\dib_b-\dib_b\di_b\big)\phi$. Now, $\dib_b\phi=\sum_{k=1}^n\bar L_k(\phi)\bom_k$ and therefore
\begin{equation}
\Label{35}
\begin{split}
\di_b\dib_b\phi&=\di_b\Big(\sum_{k=1}^n\bar L_k(\phi)\bom_k\Big)
\\&\underset{\T{by \eqref{33}}}=\sum_{i,j=1}^n\left(L_i\bar L_j(\phi)+\sum_{k=1}^n\overline{c_{ji}^k}\bar L_k(\phi)\right)\om_i\wedge\bom_j.
\end{split}
\end{equation}
Similarly,
\begin{equation}
\Label{36}
\begin{split}
\dib_b\di_b\phi&=\dib_b(\sum_{k=1}^nL_k(\phi)\om_k)
\\
&\underset{\T{by \eqref{32}}}=\sum_{i,j=1}^n \left(-\bar L_jL_i(\phi)-\sum_{k=1}^nc_{ij}^kL_k(\phi)\right)\om_i\wedge \bom_j.
\end{split}
\end{equation}
Combining \eqref{35} with \eqref{36} we get
\begin{equation}
\Label{37}
\begin{split}
\phi_{ij}&= \frac{1}{2}\Big(\di_b\dib_b\phi-\dib_b\di_b\phi\Big)\big(L_i\wedge\bar L_j\big)
\\
&=\frac12\left(L_i\bar L_j(\phi)+\bar L_jL_i(\phi)+\sum_{k=1}^n\left(\overline{c_{ji}^k}\bar L_k(\phi)+c_{ij}^kL_k(\phi)\right)\right)
\\
&=\bar L_jL_i(\phi)+\frac12\left([L_i,\bar L_j](\phi)+\sum_{k=1}^n\left(\overline{c_{ji}^k}\bar L_k(\phi)+c_{ij}^kL_k(\phi)\right)\right)
\\
&\underset{\T{by \eqref{34}}}=\bar L_jL_i(\phi)+\frac12c_{ij}T(\phi)+\sum_{k=1}^nc_{ij}^kL_k(\phi).
\end{split}
\end{equation}

To express a form in local coordinates,  let
$\I_q = \{(j_1,\dots,j_q) \in \N^q : 1 \leq j_1 < \cdots < j_q \leq n\}$, and for $J\in \I_q$, $I\in\I_{q-1}$, and $j\in\mathbb N$, $\eps^{jI}_J$ be the sign of the permutation $\{j,I\}\to J$
if $\{j\} \cup I = J$ as sets, and $0$ otherwise.
If $u\in   C^\infty_{0,q}(M)$, then $u$ is expressed locally as a combination
$$
u= \sum_{J\in\I_q} u_J\, \bar\omega_J,
$$
of basis forms $\bar\omega_J=\bar\omega_1\wedge...\wedge\bar\omega_{j_q}$ where $J = (j_1,\dots,j_q)$ and $C^\infty$-coefficients $u_J$.

We can also express the operator $\dib_b :  C^\infty_{0,q}(M)\to  C^\infty_{0,q+1}(M)$ and its $L_2$ adjoint $\dib^*_b :  C^\infty_{0,q+1}(M)\to  C^\infty_{0,q}(M)$ in the local basis as follows:
\begin{align}\label{dib}
\dib_b u&=  \sum_{\atopp{J\in\I_q}{K\in\I_{q+1}}} \sum_{k=1}^n \eps^{kJ}_K \bar L_k u_J\, \bom_K + \sum_{\atopp{J\in\I_q}{K\in\I_{q+1}}} b_{JK} u_J\, \bom_K 
\end{align}
and
\begin{eqnarray}\Label{dib*}
\dib_b^*v=- \sum_{J\in\I_q}\left(\sum_{j=1}^n L_j v_{jK}+ \sum_{K\in\I_{q+1}}a_{JK} v_K\right)\bom_J
\end{eqnarray}
where $b_{JK}, a_{JK}\in C^\infty(U)$.\\

%
%
\section{The basic estimate on CR manifolds}\label{sec:basic estimate}
In this section, we will work with the weighted $L_\phi^2$-norm  defined by
$$\no{u}_{L^2_\phi}^2=(u,u)_\phi:=\no{ue^{-\frac{\phi}{2}}}_{L^2}^2=\int_M\la u,\bar u\ra e^{-\phi}\,dV.$$
Let $\dib^*_{b,\phi}$ be the $L_\phi^2$-adjoint  of $\dib_b$. It is easy to see that for forms $u\in C^\infty_{0,q+1}(M)$ supported on $U_\mu$
\begin{equation}\Label{dib*phi}
\dib_{b,\phi}^*u=- \sum_{J\in\I_q}\left(\sum_{j=1}^n \sum_{K\in\I_{q+1}} \delta_j^\phi u_{jK}+ \sum_{K\in\I_{q+1}}a_{JK}u_K\right)\bom_J
\end{equation}
where $\delta^\phi_j \varphi:=e^\phi L_j(e^{-\phi}\varphi)$ and $a_{JK}\in C^\infty(U)$. For such $u$, 
$$
\di_b(\phi)\lrcorner u= - [\dib_b^*,\phi]u=[\dbarb,\phi]^*u=\sum_{I \in \I_{q-1}} \sum_{j=1}^n L_j(\phi)u_{jI}\bom_I,
$$
and hence $\dib_{b,\phi}^*u=\dib^*_bu-\di_b(\phi)\lrcorner u$. Furthermore,
\begin{align}
[\delta^\phi_i,\bar L_j]&=\bar L_jL_i(\phi)+[L_i,\bar L_j] \nn \\
\underset{\T{by \eqref{37} and \eqref{34}}}=&\phi_{ij}-\frac12c_{ij}T(\phi)-\sum_{k=1}^nc_{ij}^kL_k(\phi)+c_{ij}T-\sum_{k=1}^n\overline{c_{ji}^k}\bar L_k+\sum_{k=1}^n c_{ij}^kL_k \nn\\
&=\phi_{ij}+c_{ij}T-\sum_{k=1}^n\overline{c_{ji}^k}\bar L_k+\sum_{k=1}^nc_{ij}^k\delta^\phi_k -\frac12c_{ij}T(\phi). \label{38}
\end{align}

The equalities \eqref{dib} and \eqref{dib*phi} lead us to Kohn-Morrey-H\"ormander inequality or basic estimate for CR manifolds. It does not function quite in the same manner as the Kohn-Morrey-H\"ormander
inequality on domains because it cannot be applied directly
to prove closed range estimates. The terms involving $T$ require significant effort to estimate. In fact, estimating the $T$ terms are the heart of the proof of Theorem \ref{thm:L^2-theory}. Equations similar in spirit to
Theorem \ref{kmh} have appeared before (e.g., \cite[Equation (12) and (10)]{HaRa11}) but the earlier versions do not apply to as wide of a class of CR manifolds as we consider here. We will not need it here,
but we could write Theorem \ref{kmh} even more generally by using the weak $Y(q)$ technology (namely, the form $\Upsilon$) from \cite{HaRa15}. We do not do that here for expositional clarity.
\begin{theorem}\label{kmh}Let $M$ be a CR manifold and $U$ a local patch. Let $\phi$ be a real $C^2$ function and   
	$q_0$ be an integer  with $0\le q_0\le n$.
	There exists a constant $C$  (independent  of $\phi$) such that for any $u\in C^\infty_{0,q}(M)$ with support in $U$,
	\begin{multline*} 
	\no{\bar{\partial}_b u}^2_{L^2_\phi}+\no{\bar{\partial}^*_{b,\phi} u}^2_{L^2_\phi}+C\no{u}^2_{L^2_\phi} 
	\geq \frac{1}{2}\Big(\sum^{q_0}_{j=1}\no{\delta_j^{\phi} u}^2_{L^2_\phi}+\sum^{n}_{j=q_0+1}\no{\bar{L}_ju}^2_{L^2_\phi} \Big) \nn \\
	+ \sum_{I\in\I_{q-1}}\sum_{i,j=1}^n (\phi_{ij}u_{iI},u_{jI})_\phi-\sum_{J\in\I_q}\sum_{j=1}^{q_0}(\phi_{jj}u_J,u_J)_\phi  \\
	+\Rre\bigg\{\sum_{I\in\I_{q-1}}\sum_{i,j=1}^n (c_{ij}Tu_{iI},u_{jI})_\phi-\sum_{J\in\I_q}\sum_{j=1}^{q_0}(c_{jj}Tu_J,u_J)_\phi \bigg\}. \nn	
	\end{multline*}
\end{theorem}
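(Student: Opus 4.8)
The plan is to prove Theorem \ref{kmh} by a direct computation that integrates by parts the two squared norms $\no{\dbarb u}^2_{L^2_\phi}$ and $\no{\dbarbs_\phi u}^2_{L^2_\phi}$, keeping careful track of the zeroth-order and $T$-coefficients. First I would use the local expressions \eqref{dib} and \eqref{dib*phi}. Writing $\dbarb u = \sum_{K} \big(\sum_{k,J} \eps^{kJ}_K \bar L_k u_J + \sum_J b_{JK} u_J\big)\bom_K$, expanding $\no{\dbarb u}^2_{L^2_\phi}$ gives a ``diagonal'' contribution $\sum_{I}\sum_{i,j}(\bar L_i u_{iI}, \bar L_j u_{jI})_\phi$ wait -- more precisely the antisymmetrization identity $\sum_K \eps^{kJ}_K \eps^{k'J'}_K = \delta^{k'}_k \delta^{J'}_J - \eps^{k'J}_{?}\cdots$; the standard bookkeeping yields $\sum_{J}\sum_k \no{\bar L_k u_J}^2_{L^2_\phi} - \sum_I \sum_{i,j}(\bar L_j u_{iI}, \bar L_i u_{jI})_\phi$ plus error terms that are $O(\no{u}_{L^2_\phi}(\no{\bar L u}_{L^2_\phi} + \no{u}_{L^2_\phi}))$ coming from the $b_{JK}$ terms, absorbable into $C\no{u}^2_{L^2_\phi}$ and a small multiple of the good first-order terms. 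Similarly $\no{\dbarbs_\phi u}^2_{L^2_\phi}$ expands using \eqref{dib*phi} as $\sum_I \sum_{i,j}(\delta_i^\phi u_{iI}, \delta_j^\phi u_{jI})_\phi$ plus $a_{JK}$-error terms of the same absorbable type.

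Next I would integrate by parts in the cross term. The crucial step is to move one $\delta_i^\phi$ past one $\bar L_j$: since $\delta_i^\phi$ is (up to sign) the $L^2_\phi$-adjoint of $\bar L_i$, one gets $(\delta_i^\phi u_{iI}, \delta_j^\phi u_{jI})_\phi = (\bar L_j \delta_i^\phi u_{iI}, u_{jI})_\phi + (\text{l.o.t.})$, and then commuting via \eqref{38}, $\bar L_j \delta_i^\phi = \delta_i^\phi \bar L_j + [\bar L_j,\delta_i^\phi]$ hmm rather $[\delta_i^\phi,\bar L_j] = \phi_{ij} + c_{ij}T - \sum_k \overline{c_{ji}^k}\bar L_k + \sum_k c_{ij}^k \delta_k^\phi - \tfrac12 c_{ij}T(\phi)$. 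Substituting this into the cross term produces exactly (i) the Levi-form-of-$\phi$ term $\sum_I\sum_{i,j}(\phi_{ij}u_{iI},u_{jI})_\phi$, (ii) the $T$-curvature term $\sum_I\sum_{i,j}(c_{ij}Tu_{iI},u_{jI})_\phi$, (iii) a term $-\sum_I\sum_{i,j}(\sum_k \overline{c_{ji}^k}\bar L_k u_{iI},u_{jI})_\phi$ and a $\delta_k^\phi$-analogue, both again $O$ of (good first order)$\times\no{u}_{L^2_\phi}$ and absorbable, and (iv) the $-\tfrac12 c_{ij}T(\phi)$ term which is zeroth order and absorbable into $C\no{u}^2_{L^2_\phi}$ since $\phi$ is fixed and $C$ may depend on it -- wait, the statement says $C$ is independent of $\phi$, so I must instead keep $-\tfrac12 c_{ij}T(\phi)$ combined with the full $c_{ij}T$ term, which is fine because that whole block is exactly what appears on the right side after noting that $\tfrac12 c_{ij}T(\phi)$ contributions cancel against those arising from writing $\delta^\phi$ in the $\no{\dbarbs_\phi u}^2$ expansion; this cancellation is the bookkeeping I would verify carefully. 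Subtracting this cross term from $\sum_J\sum_k\no{\bar L_k u_J}^2$ converts a full sum over $k=1,\dots,n$ into: the first $q_0$ indices contribute $\delta^\phi_j$-norms (after re-integrating by parts back), indices $q_0+1,\dots,n$ stay as $\bar L_j$-norms, at the cost of the correction sums $-\sum_J\sum_{j\le q_0}(\phi_{jj}u_J,u_J)_\phi$ and $-\Rre\sum_J\sum_{j\le q_0}(c_{jj}Tu_J,u_J)_\phi$ — precisely the negative terms in the statement. The factor $\tfrac12$ on the first-order terms comes from using a small-constant Cauchy–Schwarz (absorbing $\eps$ of the first-order norms) to swallow all the $a_{JK},b_{JK},\overline{c^k_{ji}}$ error terms.

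The step I expect to be the main obstacle is the correct handling of the $q_0$-microlocal split: deciding for which indices $j$ to keep $\bar L_j$ versus trade it (via integration by parts with the commutator \eqref{38}) for $\delta_j^\phi$, and making sure that when summing over all $J\in\I_q$ rather than over all $k$, the ``diagonal'' terms $\phi_{jj}$ and $c_{jj}T$ with $j\le q_0$ are extracted with exactly the right sign and multiplicity. This is the place where the combinatorics of $\eps^{kJ}_K$ and the index-set $\I_{q-1}$ versus $\I_q$ must be reconciled, and where sign errors typically hide. The remaining technical nuisance — that the constant $C$ must not depend on $\phi$ — forces me to never integrate by parts $T(\phi)$ or $\bar L_k(\phi)$ into absorbable zeroth-order terms naively; instead those must be grouped into the displayed $c_{ij}T$-block or paired off in exact cancellations, which I would check by comparing with \eqref{dib*phi} and the identity $\dbarbs_{b,\phi}u = \dbarbs_b u - \di_b(\phi)\lrcorner u$ already recorded in the text. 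Everything else is routine once the bookkeeping is fixed; I would follow the scheme of \cite[Equations (10) and (12)]{HaRa11}, adapted to allow the general $b_{JK},a_{JK}$ coefficients present on an abstract (non-embedded) CR manifold.
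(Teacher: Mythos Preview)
Your approach is essentially the paper's: expand $\|\dbarb u\|^2_{L^2_\phi}$ and $\|\dbarbs_{\phi} u\|^2_{L^2_\phi}$ via \eqref{dib}--\eqref{dib*phi}, integrate by parts using the commutator \eqref{38}, convert $\|\bar L_j u_J\|^2$ to $\|\delta^\phi_j u_J\|^2$ for $j\le q_0$ to produce the diagonal correction terms, and absorb the first-order errors by small constant/large constant at the cost of the factor $\tfrac12$. The combinatorics and the $q_0$-split you describe are exactly what the paper does.

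There is one real gap: your mechanism for making $C$ independent of $\phi$ is not the right one. You propose that the $-\tfrac12 c_{ij}T(\phi)$ contributions from \eqref{38} ``cancel against those arising from writing $\delta^\phi$ in the $\|\dbarbs_\phi u\|^2$ expansion.'' There is no such cancellation --- the $\|\dbarbs_\phi u\|^2$ expansion produces $\sum_I\sum_{i,j}(\delta^\phi_i u_{iI},\delta^\phi_j u_{jI})_\phi$ plus $\phi$-independent zeroth-order errors, and no compensating $T(\phi)$ term. The paper's actual argument is simpler: since $(c_{jk})$ is Hermitian, $\sum_{j,k} c_{jk}u_{jI}\bar u_{kI}$ is real, while $T$ is purely imaginary and $\phi$ is real, so $T(\phi)$ is purely imaginary. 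Hence
\[
\Rre\Big\{\sum_{j,k}(c_{jk}T(\phi)u_{jI},u_{kI})_\phi\Big\}=0,
\]
and likewise for the diagonal $c_{jj}T(\phi)$ terms from the $q_0$-split. Since only the real part of the $c_{ij}T$-block appears in the statement, the $T(\phi)$ terms simply vanish upon taking real parts; no cancellation or pairing is needed. Once you replace your proposed cancellation with this observation, your proof is complete and matches the paper's.
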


\begin{proof}
We use \eqref{dib} and compute
\begin{multline*}
\|\dbarb u\|_{L^2_\phi}^2 = \sum_{\atopp{K\in\I_{q+1}}{J,J'\in\I_q}} \sum_{k,k'=1}^n \eps^{kJ}_{k'J'} \big(\bar L_k u_J, \bar L_{k'} u_{J'}\big)_\phi +  \sum_{\atopp{K\in\I_{q+1}}{J,J'\in\I_q}}  \big(b_{JK}u_J,b_{J'K}u_{J'}\big)_\phi \\
+ 2\Rre\bigg[  \sum_{\atopp{K\in\I_{q+1}}{J,J'\in\I_q}}  \sum_{k=1}^n \eps^{kJ}_K \big(\bar L_k u_J, a_{J'K} u_{J'}\big)_\phi\bigg].
\end{multline*}
If $\eps^{kJ}_{k'J'} \neq 0$, then either $k = k'$ and $J=J'$ or $J = \{k\} \cup I$ and $J' = \{k'\} \cup I$ for some $I \in \I_{q-1}$. In the latter case
$\eps^{kJ}_{k'J'} = \eps^{kk'I}_{k'kI}=-1$. We also use the notation $(\bar L u, u)_\phi$ to denote any term of the form $(a \bar L_k u_J, u_{J'})_\phi$ or its conjugate where $a\in C^\infty(U_\mu)$. 
We use $(\delta^\phi u, u)_\phi$ to denote
any term of the same form with $\delta^\phi_j$ replacing $\bar L_k$.
It therefore follows that
\begin{align*}
\|\dbarb u\|_{L^2_\phi}^2 
&= \sum_{J\in\I_q}\sum_{k\not\in J} \|\bar L_k u_J\|_{L^2_\phi}^2 - \sum_{I\in\I_{q-1}}\sum_{\atopp{k,k'=1}{k\neq k'}}^n \big( \bar L_k u_{k' I}, \bar L_{k'} u_{kI}\big)_\phi  +(\bar L u, u)_\phi + O(\|u\|_{L^2_\phi}^2) \\
&= \sum_{J\in\I_q}\sum_{k=1}^n \|\bar L_k u_J\|_{L^2_\phi}^2 - \sum_{I\in\I_{q-1}}\sum_{k,k'=1}^n \big( \bar L_k u_{k' I}, \bar L_{k'} u_{kI}\big)_\phi  +(\bar L u, u)_\phi + O(\|u\|_{L^2_\phi}^2).
\end{align*}
A similar (but simpler) calculation shows that
\[
\|\dbars_{b,\phi} u\|_{L^2_\phi}^2 = \sum_{I\in\I_{q-1}} \sum_{j,j'=1}^n \big(\delta^\phi_j u_{jI}, \delta^\phi_{j'} u_{j'I}\big)_\phi 
+(\delta^\phi u, u)_\phi + O(\|u\|_{L^2_\phi}^2) 
\]
To proceed next, we integrate by parts and observe that
\begin{equation}\label{eqn:Lu to delta u}
\big(\bar L_k u_J, \bar L_{k'} u_{J'}\big)_\phi
= \big(\delta^\phi_{k'} u_J, \delta^\phi_k u_{J'}\big)_\phi - \big([\delta^\phi_{k'},\bar L_k] u_J, u_{J'}\big)_\phi + (\bar L u,u)_\phi + (\delta^\phi u, u)_\phi+O(\no{u}^2_{L^2_\phi}).
\end{equation}
An immediate consequence of this equality is that
\[
-\big(\bar L_k u_{jI}, \bar L_j u_{kI}\big)_\phi + \big(\delta^\phi_j u_{jI}, \delta^\phi_k u_{kI}\big)_\phi = \big([\delta^\phi_j,\bar L_k] u_{jI},u_{kI}\big)_\phi + (\bar L u, u)_\phi + (\delta^\phi u,u)_\phi+O(\no{u}^2_{L^2_\phi})
\]
and therefore
\begin{align}\label{eqn:dbarb + dbarbs proto}
\|\dbarb u\|_{L^2_\phi}^2 + \|\dbars_{b,\phi}\|_{L^2_\phi}^2 
=& \sum_{J\in\I_q}\sum_{k=1}^n \|\bar L_k u_J\|_{L^2_\phi}^2 + \Rre\bigg\{ \sum_{I,I'\in\I_{q-1}}\sum_{j,k=1}^n \big([\delta^\phi_j,\bar L_k] u_{jI},u_{kI}\big)_\phi  \bigg\}\\
&+ (\bar L u, u)_\phi + (\delta^\phi u,u)_\phi+ O(\no{u}^2_{L^2_\phi}). \nn
\end{align}
Finishing the proof requires four observations. First, using \eqref{38} on the terms $([\delta^\phi_j,\bar L_k] u_{jI},u_{kI})_\phi$ produces the off-diagonal terms involving $\phi_{jk}$ and $c_{jk}T$. Second, the on-diagonal terms
appear when (\ref{eqn:Lu to delta u}) is applied to $\|L_j u_J\|_{L^2_\phi}^2$ for $1 \leq j \leq q_0$. Third, we must control $(\bar L u, u)_\phi$ and $(\delta^\phi u,u)_\phi$, but this is a simple matter of recognizing that
$(\bar L u,u)_\phi = (\delta^\phi u, u)_\phi + O(\|u\|_{L^2_\phi}^2)$ so we can absorb all of these terms using a small constant/large constant argument where we pay the price of reducing the coefficient of the ``gradient" terms to $1/2$ and 
increasing $O(\|u\|_{L^2_\phi}^2)$. We have the result, except that it is not yet clear that the $O(\|u\|_{L^2_\phi}^2)$ term is independent of $\phi$ because the term $c_{jk} T(\phi)$ appears in (\ref{38}). However, $\{c_{jk}\}_{j,k=1}^n$ is a positive semidefinite
matrix and hence has real eigenvalues, and $T$ is a purely imaginary operator. This means
\[
\Rre\Big\{\sum_{j,k=1}^n \big(c_{jk}T(\phi) u_{jI},u_{kI}\big)_\phi\Big\} =0.
\]
The $T(\phi)$ terms that appear from the integration by parts in the second observation (the one regarding the on-diagonal terms) are handled identically.
\end{proof}

The difference between the Kohn-Morrey-H\"ormander estimate for domains  (see, e.g., \cite{Str10} or \cite{ChSh01})
and Theorem \ref{kmh} is the presence of $T$ instead of a boundary integral. It is for estimating the $T$ term that we use a microlocal argument. Specifically,
the estimate for $T$ uses a consequence of the sharp G{\aa}rding inequality. Recall the formulation from \cite{Rai10}.
\begin{proposition}\label{prop:Garding 1}
Let $R$ be a first order pseudodifferential operator such that  $\sigma(R)\geq\kappa$ where $\kappa$ is some positive
constant and $(h_{jk})$ an $n\times n$ hermitian matrix (that does not depend on $\xi$).
Then there exists a constant $C$ such that if the sum of any $q$ eigenvalues of $(h_{jk})$ is nonnegative, then
\[
\Rre \Big\{\sum_{I\in\I_{q-1}}\sum_{j,k=1}^{n}\big(h_{jk}R u_{jI}, u_{kI}\big)
 \Big \}
\geq \kappa  \Rre \sum_{I\in\I_{q-1}}\sum_{j,k=1}^{n} \big(h_{jk} u_{jI}, u_{kI}\big)
-C \|u\|_{L^2}^2,
\]
and if the the sum of any collection of $(n-q)$ eigenvalues of $(h_{jk})$ is nonnegative, then
\begin{multline*}
\Rre \Big\{\sum_{J\in\I_q}\sum_{j=1}^n \big(h_{jj}R u_J, u_J\big)
-\sum_{H\in\I_{q-1}}\sum_{j,k=1}^n\big(h_{jk}R u_{jH}, u_{kH}\big)\Big \} \\
\geq \kappa  \Rre \Big\{\sum_{J\in\I_q}\sum_{j=1}^{n} \big(h_{jj} u_J, u_J\big)
-\sum_{H\in\I_{q-1}}\sum_{j,k=1}^n \big(h_{jk} u_{jH}, u_{kH}\big) \Big \} -C \|u\|_{L^2}^2 .
\end{multline*}
\end{proposition}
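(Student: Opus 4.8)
The plan is to reduce both inequalities to a single application of the sharp G\aa rding inequality (in the operator form, for systems), after diagonalizing the Hermitian coefficient matrix. First I would observe that the statement is pointwise-in-$\xi$ in the sense that the matrix $(h_{jk})$ is constant, so the only operator content is $R$. Write $R = \tfrac12(R + R^*) + \tfrac12(R - R^*)$; the anti-self-adjoint part $\tfrac12(R-R^*)$ is of order zero (since $R$ is a pseudodifferential operator with real principal symbol bounded below, $R - R^*$ has order $\le 0$), hence contributes only an $O(\|u\|_{L^2}^2)$ error and may be absorbed into the $-C\|u\|_{L^2}^2$ term. So without loss of generality $R = R^*$ and $\sigma(R) \ge \kappa > 0$, and $R - \kappa$ is a self-adjoint operator with nonnegative principal symbol of order one.

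Next I would diagonalize: since $(h_{jk})$ is Hermitian and constant, choose a unitary $U = (u_{j\ell})$ with $U^* (h_{jk}) U = \operatorname{diag}(\mu_1,\dots,\mu_n)$, and set $v_{\ell I} := \sum_j \overline{u_{j\ell}}\, u_{jI}$ for each fixed $I \in \I_{q-1}$. Because $U$ is a constant matrix it commutes with $R$, so
\[
\Rre\Big\{\sum_{I}\sum_{j,k} \big(h_{jk} R u_{jI}, u_{kI}\big)\Big\}
= \Rre\Big\{\sum_{I}\sum_{\ell} \mu_\ell \big(R v_{\ell I}, v_{\ell I}\big)\Big\}
= \sum_{I}\sum_{\ell} \mu_\ell \big((R-\kappa) v_{\ell I}, v_{\ell I}\big) + \kappa \sum_I \sum_\ell \mu_\ell \|v_{\ell I}\|^2,
\]
using $R=R^*$ in the middle step. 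The last sum equals $\kappa\,\Rre\sum_I\sum_{j,k}(h_{jk} u_{jI}, u_{kI})$ after undoing the unitary change of variables, which is exactly the main term on the right-hand side of the first claimed inequality. Thus it remains to show the remainder $\sum_I \sum_\ell \mu_\ell ((R-\kappa) v_{\ell I}, v_{\ell I})$ is bounded below by $-C\|u\|_{L^2}^2$. This is where the hypothesis enters: "the sum of any $q$ eigenvalues of $(h_{jk})$ is nonnegative" is precisely the statement that, grouping the $\mu_\ell$ according to the $q$-element multi-indices $J \supseteq I$ (equivalently, combining the $I$-summation appropriately), the relevant matrix symbol $\sum_\ell \mu_\ell\,(\text{rank-one projector onto the }\ell,I\text{ slot})$, times the nonnegative scalar symbol $\sigma(R-\kappa)$, assembles into a Hermitian matrix-valued symbol that is $\ge 0$; the sharp G\aa rding inequality for systems then yields the bound with the $-C\|u\|_{L^2}^2$ error. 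For the second inequality the same computation applies verbatim with the bracketed expression $\sum_J \sum_j h_{jj}(\cdots) - \sum_H \sum_{j,k} h_{jk}(\cdots)$ replacing the single sum; the eigenvalue bookkeeping is the standard identity showing that this combination has symbol whose eigenvalues are the $(n-q)$-fold partial sums of the $\mu_\ell$, so the hypothesis on sums of $(n-q)$ eigenvalues is exactly what makes the symbol nonnegative.

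The main obstacle—and the only genuinely non-formal point—is the bookkeeping that identifies the quadratic form $\sum_{I\in\I_{q-1}}\sum_{j,k}(h_{jk}\,\cdot_{jI},\,\cdot_{kI})$ (respectively the $\sum_J\sum_j h_{jj} - \sum_H\sum_{j,k}h_{jk}$ combination) with a Hermitian matrix-valued symbol whose positivity is governed exactly by "sums of $q$ (resp.\ $n-q$) eigenvalues of $(h_{jk})$ are nonnegative." This is the same linear-algebra fact underlying the Morrey–Kohn–Hörmander estimate and condition $Z(q)$, and I would cite it from \cite{Rai10} or \cite{Str10} rather than reprove it; given it, applying sharp G\aa rding to the nonnegative symbol $\sigma(R-\kappa)$ times that matrix is routine. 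One should also note at the outset that $R$ being merely a pseudodifferential operator (not, say, $\psi$DO with a smooth symbol down to order $-\infty$) is harmless: only the principal symbol and the order-zero remainder matter, and sharp G\aa rding requires only $C^\infty$ symbols of the stated orders.
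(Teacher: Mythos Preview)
The paper does not prove this proposition; it simply recalls the statement from \cite{Rai10}. So there is no ``paper's own proof'' to compare against, and your task was in effect to supply one.

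Your overall strategy---reduce to self-adjoint $R$, identify the quadratic form on $(0,q)$-forms with a Hermitian matrix whose nonnegativity is equivalent to the eigenvalue-sum hypothesis, then invoke the sharp G\aa rding inequality for systems on the symbol $(\sigma(R)-\kappa)$ times that matrix---is the right one. But there is a genuine gap in the execution: you assume $(h_{jk})$ is a \emph{constant} matrix and use this to commute the diagonalizing unitary $U$ past $R$. The sentence immediately following the proposition in the paper says explicitly that $(h_{jk})$ ``may be a matrix-valued function in $z$,'' and in the application (Lemma~\ref{l3.2}) one takes $(h_{jk})=(c_{ij})$, the Levi matrix, which varies over $M$. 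With $(h_{jk})=(h_{jk}(x))$ the diagonalizing unitary is $U=U(x)$; it does not commute with $R$, and the eigenvalue functions $\mu_\ell(x)$ need not vary smoothly where they cross, so your reduction to $\sum_I\sum_\ell \mu_\ell\big((R-\kappa)v_{\ell I},v_{\ell I}\big)$ is not justified.

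The repair is to drop the diagonalization and apply matrix-valued sharp G\aa rding directly. Let $\tilde H(x)$ be the $\binom{n}{q}\times\binom{n}{q}$ Hermitian matrix representing the form $u\mapsto\sum_I\sum_{j,k}h_{jk}(x)u_{jI}\overline{u_{kI}}$ on $(0,q)$-forms (respectively the trace-minus-form combination for the second inequality). The linear-algebra fact you cite says the eigenvalues of $\tilde H(x)$ are exactly the $q$-fold (resp.\ $(n-q)$-fold) partial sums of eigenvalues of $(h_{jk}(x))$, so the hypothesis gives $\tilde H(x)\ge 0$ pointwise and smoothly in $x$. The matrix-valued symbol $a(x,\xi):=(\sigma(R)(x,\xi)-\kappa)\,\tilde H(x)$ is then nonnegative and of order~$1$; sharp G\aa rding for systems yields $\Rre\big(a(x,D)u,u\big)\ge -C\|u\|_{L^2}^2$, and unwinding $a(x,D)$ modulo order-zero errors gives the claimed inequality. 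This is essentially what you sketch in your last paragraph; the point is that the diagonalization detour is both unnecessary and, for variable $(h_{jk})$, invalid.
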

Note that $(h_{jk})$ may be a matrix-valued function in $z$ but may not depend on $\xi$.

\subsection{Microlocal analysis -- the setup}
\label{s3}
To bound the terms from Theorem \ref{kmh} that involve $T$, we continue to work on smooth forms that are supported in a small  neighborhood $U\subset M$. Our approach is microlocal and we
adopt the familiar setup introduced by Kohn \cite{Koh86,Koh02}. See also Nicoara \cite{Nic06} and Raich \cite{Rai10}.

Denote the coordinates $\R^{2n+1}$ by 
$x=(x',x_{2n+1})=(x_1,...x_{2n},x_{2n+1})$ with the origin at some $x_0\in U$. We can arrange the coordinates so that if $z_j=x_j+\sqrt{-1}x_{j+n}$ for $j=1,\dots,n$, 
then  $L_j|_{z_0}=\frac{\di}{\di z_j}|_{z_0}$ for $j=1,\dots,n$, and $T=-\sqrt{-1}\frac{\di}{\di x_{2n+1}}$.  Let $\xi=$ $(\xi_1,...,\xi_{2n+1})$ $=(\xi',\xi_{2n+1})$ be the dual coordinates to $x$ in Fourier space.

 Let $\mathcal C^+,\mathcal C^-,\mathcal C^0$ be a covering of $\R^{2n+1}$ so that
 \begin{eqnarray}\begin{split}
\mathcal C^+=&\big\{\xi : \xi_{2n+1}> \frac{1}{4} |\xi'| \big\}\cap \{\xi : |\xi|\ge 1\};\\
\mathcal C^-=&\{\xi : \xi\in \mathcal C^+\};\\
\mathcal C^0=&\{\xi : | \xi_{2n+1}|<\frac{3}{4}|\xi'| \}\cup \{\xi :|\xi|<3\}.
\end{split}
\end{eqnarray}
For the remainder of this paper, let $\psi$ be a smooth function so that $\psi \equiv 1$ on $\{\xi : \xi_{2n+1} > \frac 13 |\xi'|\} \cap \{\xi : |\xi| \geq 2\}$ and $\supp \psi \subset \opC^+$.  It follows from the definitions
of $\opC^+$, $\opC^0$, and $\psi$, that $\supp d\psi \subset \opC^0$.
Define
\[
\psi^+(\xi):=\psi(\xi), \qquad \psi^-(\xi):=\psi(-\xi), \qquad \psi^0(\xi):=\sqrt{1-(\psi^+(\xi))^2-(\psi^-(\xi))^2}. 
\]
Let $\tilde\psi^0$ be a smooth function that dominates $\psi^0$ in the sense that 
$\supp\tilde\psi^0\subset \mathcal C^0$ and $\tilde\psi^0=1$ on a neighborhood of $\supp \psi^0\cup\supp(d\psi^+) \cup \supp (d\psi^-)$.

Associated to the smooth function $\psi$ is a pseudodifferential operator $\Psi$ whose symbol $\sigma(\Psi)=\psi$. This means that if $\varphi\in C^\infty_c(U)$, then
$$\widehat{\Psi \varphi}(\xi)=\psi(\xi)\hat{\varphi}(\xi),$$
where $\hat{}$ denotes the Fourier transform. The operators $\Psi^+$, $\Psi^-$, $\Psi^0$, and $\tilde\Psi^0$ are defined analogously, with symbols $\psi^+$, $\psi^-$, $\psi^0$, and $\tilde\psi^0$, respectively. 

By construction, $(\psi^+)^2 + (\psi^-)^2 + (\psi^0)^2 =1$, from which it follows immediately that 
$(\Psi^+)^*\Psi^+ + (\Psi^-)^*\Psi^- + (\Psi^0)^*\Psi^0 = Id$, the identity operator.

For the proof of Theorem~\ref{thm:L^2-theory}, we will need dilated versions  $\Psi^\bullet$ and $\tilde\Psi^0$ where the superscript $\bullet$ means $+$, $-$, or $0$.
Let $A\ge1$ (chosen later). Let $\Psi^\bullet_A$ and $\tilde\Psi^0_A$ be the pseudodifferential operators with symbol $\psi^\bullet_A(\xi) = \psi^\bullet(\xi/A)$ and $\tilde\psi^\bullet_A(\xi) = \tilde\psi^\bullet(\xi/A)$, respectively. We say that a cutoff function
$\zeta$ dominates a cutoff function $\zeta'$ and denote by $\zeta'\prec\zeta$ if $\zeta \equiv 1$ on $\supp\zeta'$.
We write the next several results for a generic $A$ but will use $A:=t$ in the proof of Theorem \ref{thm:global regularity} and $A:=A_\eps$ in the proof of Theorem \ref{thm:CRpsh (0,1)}. The next result follows immediately from
Proposition \ref{prop:Garding 1} and the arguments of Lemma 4.6 and Lemma 4.7 in \cite{Rai10}.
\begin{lemma}
	\Label{l3.2} Let $M$ be a pseudoconvex CR manifold and $U$ be a local patch of $M$. For $\phi\in C^\infty(M)$, $u\in C^\infty_{0,q}(M)$, and cutoff functions $\zeta\prec\tilde\zeta\prec \zeta'$ on $U$, we have
	\begin{eqnarray}\label{Psi+}
	\begin{split}
(i)	\qquad\T{\normalfont Re}\Big((d\gamma\lrcorner T\tilde\zeta \Psi^+_A\zeta u,\tilde\zeta \Psi^+_A\zeta u)_\phi\Big)
	\ge &A(d\gamma\lrcorner \tilde\zeta \Psi^+_A\zeta u,\tilde\zeta \Psi^+_A\zeta u)_\phi\\ -&c\no{\tilde\zeta \Psi^+_A\zeta u}_{L^2_\phi}^2-c_{A,\phi}\no{ \zeta'\tilde\Psi^0_A\zeta u}_{L^2}^2
	\end{split}
	\end{eqnarray}
	for any $q=1,\dots, n$; and
		\begin{eqnarray}\label{Psi-}
		\begin{split}
(ii)		\qquad\T{\normalfont Re}&\Big((d\gamma\lrcorner T\tilde\zeta \Psi^-_A\zeta u,\tilde\zeta \Psi^-_A\zeta u)_{-\phi}-(\Tr(d\gamma) T\tilde\zeta \Psi^-_A\zeta u,\tilde\zeta \Psi^-_A\zeta u)_{-\phi}\Big)\\
		\ge &A\Big(\Tr(d\gamma) \tilde\zeta \Psi^-_A\zeta u,\tilde\zeta \Psi^-_A\zeta u)_{-\phi}-(d\gamma\lrcorner \tilde\zeta \Psi^-_A\zeta u,\tilde\zeta \Psi^-_A\zeta u)_{-\phi}\Big)\\
		& -c\no{\tilde\zeta \Psi^-_A\zeta u}^2_{L^2_{-\phi}}-c_{A,\phi}\no{ \zeta'\tilde\Psi^0_A\zeta u}_{L^2}^2
		\end{split}
		\end{eqnarray}
		for any $q=0,1,\dots, n-1$. Here,  $c$ (resp. $c_{A,\phi}$)  is a positive constant independent (resp. dependent) of  $\phi$.
\end{lemma}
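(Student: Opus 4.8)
The plan is to derive Lemma~\ref{l3.2} as a microlocal consequence of the sharp G\aa rding inequality in the form of Proposition~\ref{prop:Garding 1}. The key observation is that when one localizes a form $u$ to the cone $\opC^+$ by applying $\tilde\zeta\Psi^+_A\zeta$, the operator $T=-\sqrt{-1}\,\partial/\partial x_{2n+1}$ acts (up to lower-order errors and errors supported in $\opC^0$) like multiplication by a \emph{positive} first-order symbol: on $\supp\psi^+_A$ we have $\xi_{2n+1}\ge \tfrac14|\xi'|$ and $|\xi|\gtrsim A$, so the symbol of $T$ restricted there is bounded below by a positive multiple of $A$ plus something comparable to $|\xi|$. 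Thus $R:=$ (the relevant microlocalization of $T$) is a first-order $\Psi$DO with $\sigma(R)\ge\kappa$ for a suitable $\kappa\gtrsim A$, which is exactly the hypothesis of Proposition~\ref{prop:Garding 1}. The Hermitian matrix to feed into the G\aa rding inequality is $(h_{jk})=(c_{jk})$, the Levi matrix, which is positive semidefinite by the pseudoconvex-oriented hypothesis; in particular the sum of any $q$ of its eigenvalues is nonnegative, so the first inequality of Proposition~\ref{prop:Garding 1} applies and yields (i). For (ii) one works in the $\opC^-$ cone, where $\xi_{2n+1}<0$, so $-T$ (rather than $T$) is the positive operator; the combination $\Tr(d\gamma)\,\mathrm{Id}-(c_{jk})$ has the property that the sum of any $(n-q)$ of its eigenvalues is nonnegative precisely when the Levi matrix is positive semidefinite, which is why the second inequality of Proposition~\ref{prop:Garding 1} is the one that applies, and the weight is $-\phi$ because on the negative cone the relevant integration-by-parts produces $e^{+\phi}$.

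Concretely, the steps I would carry out are: (1) Write $T\tilde\zeta\Psi^+_A\zeta u$ and commute $T$ past the cutoffs; commutators $[T,\tilde\zeta\Psi^+_A\zeta]$ are order zero and, crucially, their symbols are supported where $d\psi^+_A\ne 0$, hence in $\opC^0$, so they contribute a term controlled by $\|\zeta'\tilde\Psi^0_A\zeta u\|_{L^2}$ — this accounts for the $c_{A,\phi}\|\zeta'\tilde\Psi^0_A\zeta u\|_{L^2}^2$ error term, whose constant is allowed to depend on $A$ and $\phi$. (2) On the piece genuinely localized to $\opC^+$, replace $T$ by the first-order operator $R$ with $\sigma(R)\ge\kappa\sim A$ (again modulo $\opC^0$-errors and an $O(\|u\|^2)$ term), and invoke the first estimate of Proposition~\ref{prop:Garding 1} with $h_{jk}=c_{jk}$ to pull out the factor $A$ in front of $(d\gamma\lrcorner\,\cdot\,,\cdot)_\phi$, at the cost of a $\phi$-independent constant times $\|\tilde\zeta\Psi^+_A\zeta u\|_{L^2_\phi}^2$. (3) Track the distinction between the $L^2_\phi$ norm and the plain $L^2$ norm: the $\phi$-independence of $c$ comes from the fact, already exploited in the proof of Theorem~\ref{kmh}, that $c_{jk}T(\phi)$ contributes nothing to the real part since $(c_{jk})$ is Hermitian with real eigenvalues and $T$ is purely imaginary; only the $\opC^0$-error constant $c_{A,\phi}$ is allowed to see $\phi$ (through the weight in passing between $L^2_\phi$ and $L^2$). (4) Repeat the argument in $\opC^-$ with $-T$, the weight $-\phi$, and the matrix $\Tr(d\gamma)\,\mathrm{Id}-(c_{jk})$ to get (ii). This is essentially the content of Lemmas~4.6 and 4.7 of \cite{Rai10}, so I would cite that work for the bookkeeping and only indicate the changes needed for the dilation parameter $A$ and the present level of generality.

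The main obstacle — and the reason the $c_{A,\phi}$ constant is unavoidable — is the interaction between the dilation parameter $A$ and the cutoff errors: when $A$ is large the symbol $d\psi^+_A$ is spread over a large annulus, so the $\opC^0$-error term genuinely grows with $A$, and one must be careful that the gain of a \emph{factor} $A$ in the main term (from $\sigma(R)\gtrsim A$) is a clean gain while the loss is only an \emph{additive} error with an $A$-dependent (but for fixed $A$ harmless) constant. Making this precise requires checking that all symbol manipulations are uniform enough — that the constant $\kappa$ in the G\aa rding inequality really is $\gtrsim A$ and not merely positive — which is the one place where the dilated symbols $\psi^\bullet_A$ must be handled with the scaling $\xi\mapsto\xi/A$ in mind rather than treated as fixed cutoffs. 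Everything else (the commutator estimates, the integration by parts producing the weight, the reduction from $T$ to a scalar first-order operator on the good cone) is routine pseudodifferential calculus of the kind already carried out in \cite{Koh86}, \cite{Nic06}, and \cite{Rai10}.
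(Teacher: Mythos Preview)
Your proposal is correct and follows exactly the route the paper takes: the paper's entire proof consists of the sentence ``The next result follows immediately from Proposition~\ref{prop:Garding 1} and the arguments of Lemma~4.6 and Lemma~4.7 in \cite{Rai10},'' and your sketch is precisely an unpacking of that citation, with the same ingredients (positivity of $\sigma(T)\gtrsim A$ on $\supp\psi^+_A$, the Levi matrix as $(h_{jk})$, commutator errors landing in $\opC^0$, and the sign switch on $\opC^-$). One small wording point: in part~(ii) you apply the \emph{second} inequality of Proposition~\ref{prop:Garding 1} directly with $(h_{jk})=(c_{jk})$ and $R=-T$; the relevant eigenvalue hypothesis is on $(c_{jk})$ itself (sum of any $n-q$ eigenvalues nonnegative, which pseudoconvexity guarantees), not on $\Tr(d\gamma)\,\mathrm{Id}-(c_{jk})$.
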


In combination with the Kohn-Morrey-H\"omander inequality, Lemma~\ref{l3.2} yields 
\begin{corollary}
	\label{c3.3} Let $M$ be a pseudoconvex CR manifold and $U$ be a local patch of $M$. For $\phi\in C^\infty(M)$, $u\in C^\infty_{0,q}(M)$, and cutoff function $\zeta\prec\tilde\zeta\prec \zeta'$ on $U$, then we have
	\begin{eqnarray}
	\Label{nova2}
	\begin{split}
	c_{A,\phi}\no{\zeta'\tilde\Psi_A^0\zeta u}_{L^2}^2+&c\left(\no{\tilde \zeta\Psi_A^+\zeta u}_{L^2_\phi}^2+\no{\bar{\partial}_b \tilde \zeta\Psi_A^+\zeta u}_{L^2_\phi}^2+\no{\bar{\partial}^*_{b,\phi}\tilde \zeta\Psi_A^+\zeta u}_{L^2_\phi}^2\right)\\
	\ge &\left((\L_\phi+Ad\gamma)\lrcorner \tilde \zeta\Psi_A^+\zeta u,\tilde \zeta\Psi_A^+\zeta u\right)_\phi
	\end{split}
	\end{eqnarray}
	for any $n=1,\dots, n$; and
	\begin{eqnarray}
	\Label{nova3}
	\begin{split}
	c_{A,\phi}\no{\zeta'\tilde\Psi_A^0\zeta u}_{L^2}^2+&c\left(\no{\tilde \zeta\Psi_A^-\zeta u}^2_{L^2_{-\phi}}+\no{\bar{\partial}_b \tilde \zeta\Psi_A^-\zeta u}^2_{L^2_{-\phi}}+\no{\bar{\partial}^*_{b,-\phi}\tilde \zeta\Psi_A^-\zeta u}^2_{L^2_{-\phi}}\right)\\
	\ge &\Big([\Tr(\L_\phi)+A\Tr(d\gamma)]\times \tilde\zeta \Psi^-_A\zeta u,\tilde\zeta \Psi^-_A\zeta u\Big)_{-\phi}\\
	&- \left([\L_\phi+Ad\gamma]\lrcorner \tilde \zeta\Psi_A^{-}\zeta u,\tilde \zeta\Psi_A^-\zeta u\right)_{-\phi}
	\end{split}
	\end{eqnarray}
	for any $q=0,\dots,n-1$.
\end{corollary}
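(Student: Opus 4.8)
The plan is to feed the microlocalized forms $v^{+}:=\tilde\zeta\Psi^{+}_{A}\zeta u$ and $v^{-}:=\tilde\zeta\Psi^{-}_{A}\zeta u$ into the Kohn--Morrey--H\"ormander inequality of Theorem \ref{kmh} and then invoke Lemma \ref{l3.2} to dispose of the terms containing $T$. Observe first that $v^{+}$ and $v^{-}$ are smooth $(0,q)$-forms supported in $U$, since the outermost factor $\tilde\zeta$ is compactly supported in $U$; hence Theorem \ref{kmh} applies to them verbatim, with no commutators to track.

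For \eqref{nova2} I would apply Theorem \ref{kmh} to $v^{+}$ with the weight $\phi$ and with $q_{0}=0$ (in the notation of Theorem \ref{kmh}), so that all diagonal correction terms drop out and the estimate reads
\[
\no{\dbarb v^{+}}^{2}_{L^{2}_{\phi}}+\no{\dbars_{b,\phi} v^{+}}^{2}_{L^{2}_{\phi}}+C\no{v^{+}}^{2}_{L^{2}_{\phi}}\ \ge\ \tfrac12\sum_{j=1}^{n}\no{\bar L_{j} v^{+}}^{2}_{L^{2}_{\phi}}+(\L_{\phi}\lrcorner v^{+},v^{+})_{\phi}+\Rre\big((d\gamma\lrcorner T v^{+},v^{+})_{\phi}\big).
\]
I would then discard the nonnegative gradient sum, bound $\Rre\big((d\gamma\lrcorner Tv^{+},v^{+})_{\phi}\big)$ from below by Lemma \ref{l3.2}(i), and merge the resulting $O(\no{v^{+}}^{2}_{L^{2}_{\phi}})$ error into the coefficient of $\no{v^{+}}^{2}_{L^{2}_{\phi}}$. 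Since the constant $C$ of Theorem \ref{kmh} and the constant $c$ of Lemma \ref{l3.2} are both independent of $\phi$, the common constant needed in front of $\no{v^{+}}^{2}_{L^{2}_{\phi}}$, $\no{\dbarb v^{+}}^{2}_{L^{2}_{\phi}}$ and $\no{\dbars_{b,\phi}v^{+}}^{2}_{L^{2}_{\phi}}$ can be taken independent of $\phi$, while the term $c_{A,\phi}\no{\zeta'\tilde\Psi^{0}_{A}\zeta u}^{2}_{L^{2}}$ is carried over unchanged from Lemma \ref{l3.2}(i). This is exactly \eqref{nova2}, on the same range $q=1,\dots,n$ as in Lemma \ref{l3.2}(i).

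For \eqref{nova3} I would instead apply Theorem \ref{kmh} to $v^{-}$ with the \emph{function} $-\phi$ (hence the weight $-\phi$) and with $q_{0}=n$. Since $(-\phi)_{ij}=-\phi_{ij}$ while the Levi form $d\gamma$ does not depend on $\phi$, the Hessian contribution becomes $(\Tr(\L_{\phi})v^{-},v^{-})_{-\phi}-(\L_{\phi}\lrcorner v^{-},v^{-})_{-\phi}$ and the $T$-contribution becomes $\Rre\big((d\gamma\lrcorner Tv^{-},v^{-})_{-\phi}-(\Tr(d\gamma)Tv^{-},v^{-})_{-\phi}\big)$, which is precisely the quantity bounded from below in Lemma \ref{l3.2}(ii). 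Discarding the nonnegative gradient sum $\tfrac12\sum_{j=1}^{n}\no{\delta^{-\phi}_{j} v^{-}}^{2}_{L^{2}_{-\phi}}$, inserting the bound of Lemma \ref{l3.2}(ii), and again absorbing the $O(\no{v^{-}}^{2}_{L^{2}_{-\phi}})$ error into a $\phi$-independent constant yields \eqref{nova3} on the range $q=0,\dots,n-1$.

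The computation is short; the only delicate point is the sign-and-weight bookkeeping in the second case. One must apply Theorem \ref{kmh} with the function $-\phi$ itself, not merely reweight with it, so that the complex Hessian $\L_{\phi}$ reverses sign while $d\gamma$ does not; this is exactly what turns the Kohn--Morrey--H\"ormander output into the ``trace minus contraction'' combination of $\L_{\phi}+A\,d\gamma$ appearing on the right-hand side of \eqref{nova3}. The other thing to watch is that the error constant stays independent of $\phi$, which is guaranteed because both Theorem \ref{kmh} and Lemma \ref{l3.2} furnish $\phi$-independent constants, everything $\phi$-dependent being confined to the $c_{A,\phi}$ term.
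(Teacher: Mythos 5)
Your proof is correct and is precisely the argument the paper intends: the paper derives Corollary \ref{c3.3} by combining the Kohn-Morrey-H\"ormander inequality with Lemma \ref{l3.2}, and your instantiation of Theorem \ref{kmh} at $q_0=0$ with weight $\phi$ applied to $\tilde\zeta\Psi^+_A\zeta u$ (resp.\ at $q_0=n$ with the function $-\phi$ applied to $\tilde\zeta\Psi^-_A\zeta u$), followed by Lemma \ref{l3.2}(i) (resp.\ (ii)) to control the $T$-terms, is exactly that combination. The sign-and-weight bookkeeping in the $\Psi^-$ case and the separation of $\phi$-independent constants from the $c_{A,\phi}$ term are both handled correctly.
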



%
%

\section{The $L^2$-Sobolev Theory for $\Boxb$ and the proof of Theorem \ref{thm:L^2-theory}}\label{sec:proving L^2 theory}
\subsection{The existence to the solution of the $\Box_b$ when $n\geq 2$ and $1\leq q \leq n-1$.}
We now assume that $M$ is endowed a with a smooth function $\lambda$ that is strictly CR-plurisubharmonic on $(0,q_0)$-forms whose defining inequality is given by (\ref{CRpluri}).
The function $\lambda$ is $q_0$-compatible in the language of Harrington and Raich \cite{HaRa11}, and we can follow their argument nearly
verbatim to establish a weighted $L^2$-theory (compare with the proof of \cite[Theorem 1.2]{HaRa11}). 

Observe that if the inequality (\ref{CRpluri}) holds for $q_0$, then it holds  for any $q\ge q_0$. Additionally,
\begin{eqnarray}\label{new2}\begin{split}
\la[\Tr(\L_{\lambda})+\Tr(d\gamma)] v,\bar v\ra -\la(\L_{\lambda}+d\gamma)\lrcorner v, \bar v\ra \ge a|v|^2\end{split}
\end{eqnarray}
for all $(0,q)$-forms $v$ with $q\le n-q_0$.  

For each $t\ge 1$, we 
use Corollary~\ref{c3.3} with $\phi=t\lambda$ to obtain
	\begin{eqnarray}
	\Label{local1}
	\begin{split}
	c_{t}\no{\zeta'\tilde\Psi_t^0\zeta u}_{L^2}^2+&c\left(\no{\tilde \zeta\Psi_t^+\zeta u}^2_{L^2_{t\lambda}}+\no{\bar{\partial}_b \tilde \zeta\Psi_t^+\zeta u}^2_{L^2_{t\lambda}}+\no{\bar{\partial}^*_{b,t\lambda}\tilde \zeta\Psi_t^+\zeta u}^2_{L^2_{t\lambda}}\right)\\
	\ge &\left((\L_{t\lambda}+td\gamma)\lrcorner \tilde \zeta\Psi_t^+\zeta u,\tilde \zeta\Psi_t^+\zeta u\right)_{t\lambda}\\
	\ge & at\no{\tilde \zeta\Psi_t^+\zeta u}^2_{L^2_{t\lambda}}
		\end{split}
		\end{eqnarray}
for any $u\in C^\infty_{0,q}(M)$ with $q\ge q_0$. Analogously, we also have
	\begin{eqnarray}
	\Label{local2}
	\begin{split}	c_{t}\no{\zeta'\tilde\Psi_t^0\zeta u}_{L^2}^2+&	c\left(\no{\tilde \zeta\Psi_t^-\zeta u}^2_{L^2_{-t\lambda}}+
	\no{\bar{\partial}_b \tilde \zeta\Psi_t^-\zeta u}^2_{L^2_{-t\lambda}}+\no{\bar{\partial}^*_{b,-t\lambda}\tilde \zeta\Psi_t^-\zeta u}^2_{L^2_{-t\lambda}}\right)\\
	\ge &\, a t\no{\tilde \zeta\Psi_t^-\zeta u}^2_{L^2_{-t\lambda}}
	\end{split}
	\end{eqnarray}
	for any $u\in C^\infty_{0,q}(M)$ with $ q\le n-q_0$. This estimate holds for cutoff functions $\zeta,\tilde\zeta,\zeta'$ having compact support on a local patch $U$ of $M$. In order to 
	prove a  global estimate, we let $\{U_\nu\}$ be a cover of $M$ and $\{\zeta_\nu\}$ 
	be a partition of unity subordinate to $\{U_\nu\}$. Supported on each $U_\nu$ are the pseudodifferential operators $\Psi^{\cdot,\nu}_t$ and $\tilde\Psi^{0,\nu}_t$ 
	where $\cdot$ represents $+$, $-$, or $0$. For each $\zeta_\nu$, let $\tilde\zeta_\nu$ be a cutoff function that dominates $\zeta_\nu$. 
	We define an inner product and norm that are well-suited to estimates using microlocal analysis. Set
\begin{multline*}
\la| u,v |\ra_{t\lam} 
= \sum_\nu \Big[ ( \tilde\zeta_\nu\Psi^+_t\zeta_\nu u^\nu, \tilde\zeta_\nu\Psi^+_t\zeta_\nu v^\nu )_{L^2_{t\lambda}} 
+ (\tilde\zeta_\nu \Psi^0_t \zeta_\nu u^\nu ,\tilde\zeta_\nu \Psi^0_t \zeta_\nu v^\nu )
+ (\tilde\zeta_\nu \Psi^-_t \zeta_\nu u^\nu, \tilde\zeta_\nu \Psi^-_t \zeta_\nu v^\nu )_{-t\lambda} \Big]
\end{multline*}
and 
\[
\la| u |\ra_{t\lambda} ^2 
= \sum_{\nu} \Big[ \| \tilde\zeta_\nu \Psi^+_t \zeta_\nu u^\nu \|_{L^2_{t\lambda}}^2 + \|\tilde\zeta_\nu \Psi^0_t \zeta_\nu u^\nu \|_{L^2}^2
+ \|\tilde\zeta_\nu \Psi^-_t \zeta_\nu u^\nu \|_{L^2_{-t\lambda}}^2 \Big],
\]
where $u^\nu$ is the form $u$ expressed in the local coordinates on $U_\nu$. The superscript $\nu$ will often be omitted. We denote the adjoint of $\dbarb$ with respect to this norm
by $\dbarb^{*,t}$ and the associated quadratic form
\[
Q_{b,t\lambda}\la| u,v|\ra = \la| \dbar u, \dbar v |\ra_{t\lambda} + \la| \dbarb^{*,t} u, \dbarb^{*,t} v |\ra_{t\lambda}.
\]

The space of $t\lambda$-harmonic forms $\opH^q_{t\lambda}(M)$ is
\[
\opH^q_{t\lambda}(M) = \{ u \in L^2_{0,q}(M) : Q_{b,t\lambda}\la|u,u|\ra =0\}.
\]
By using the pseudoconvex-oriented hypothesis, the estimates \eqref{local1}-\eqref{local2} for $U_\nu$, and
the well-known elliptic estimate for $\Psi^0$, it follows that there exists $T_0>0$ such that for any $t\ge t_0$ the estimate 
\begin{eqnarray}\label{L2}
\la| u |\ra_{t\lambda} ^2 \le \frac{c}{t}Q_{b,t\lambda}\la|u,u|\ra+c_t\no{u}_{H^{-1}}^2
\end{eqnarray} 
holds for all $u\in C^\infty_{0,q}(M)$ with $q_0\le q\le n-q_0$. See \cite{Nic06,HaRa11} for details.

For a form $u$ on $M$, the Sobolev norm of order $s$ is given by the following:
\[
\|u\|_{H^s}^2 = \sum_\nu \|\tilde\zeta_\nu\Lambda^s\zeta_\nu \vp^\nu \|_{L^2}^2
\]
where $\Lambda$ is defined to be the pseudodifferential operator with symbol $(1+|\xi|^2)^{1/2}$.

As in \cite{HaRa11}, we can also bring the estimate \eqref{L2} to higher order Sobolev indices: for each $s\ge0$, there exists $T_s>0$ such that for any $t\ge T_s$, 
 \begin{eqnarray}\label{Hs}
 \la|\Lambda^s u |\ra_{t\lambda} ^2 \le \frac{c}{t}\left(\la|\Lambda^s\dib_bu|\ra_{t\lambda}^2+\la|\Lambda^s\dib_b^{*,t}u|\ra_{t\lambda}^2\right)+c_t\no{u}_{H^{s-1}}^2
 \end{eqnarray} 
 holds for all $u\in C^\infty_{0,q}(M)$ with $q_0\le q\le n-q_0$. 
In \cite{Nic06}, it is shown that  there exist constants $c_t$ and $C_t$ so that
\begin{equation}\label{eqn:norm equivalence}
c_t \| u\|_{L^2}^2 \leq \la| u|\ra_{t\lambda}^2  \leq C_t \|u\|_{L^2}^2
\end{equation}
where $c_t$ and $C_t$ depend on $\max_{M}|\lambda|$. We thus have closed range estimates for  $\dbarb:H^s_{0,q}(M)\to H^s_{0,q+1}(M)$ and $\dbarb^{*,t}:H^s_{0,q}(M)\to H^s_{0,q-1}(M)$.
The following theorem now follows from the arguments of \cite{HaRa11}.

%
%
\begin{theorem}\label{thm:main theorem for weighted spaces}
Let $M^{2n+1}$ be an abstract CR manifold that is pseudoconvex-oriented and admits a smooth function $\lambda$ that is strictly CR plurisubharmonic on $(0,q_0)$-forms for some $1 \leq q_0 \leq \frac n2$. 
Then for all $q_0 \leq q \leq n-q_0$
and $s\geq 0$, there exists $T_s\geq 0$
so that the following hold:
\begin{enumerate}\renewcommand{\labelenumi}{(\roman{enumi})}
\item The operators $\dbarb: L^2_{0,q}(M)\to L^2_{0,q+1}(M)$ and $\dbarb: L^2_{0,q-1}(M)\to L^2_{0,q}(M)$ have closed range
with respect to $\la|\cdot|\ra_{t\lam}$. Additionally, for any $s>0$ if $t\geq T_s$, then
$\dbarb: H^s_{0,q}(M)\to H^s_{0,q+1}(M)$ and $\dbarb: H^s_{0,q-1}(M)\to H^s_{0,q}(M)$ have closed range with respect to $\la|\Lambda^s\cdot|\ra_{t\lam}$;
\item  The operators $\dbarb^{*,t}: L^2_{0,q+1}(M)\to L^2_{0,q}(M)$ and $\dbarb^{*,t}: L^2_{0,q}(M)\to L^2_{0,q-1}(M)$ have closed range
with respect to $\la|\cdot|\ra_{t\lam}$. Additionally, if $t\geq T_s$, then
$\dbarb^{*,t}: H^s_{0,q+1}(M)\to H^s_{0,q}(M)$ and $\dbarb^{*,t}: H^s_{0,q}(M)\to H^s_{0,q-1}(M)$ have closed range with respect to $\la|\Lambda^s\cdot|\ra_{t\lam}$;
\item The Kohn Laplacian defined by $\Boxb^t = \dbarb\dbarb^{*,t} + \dbarb^{*,t}\dbarb$ has closed range on $L^2_{0,q}(M)$
(with respect to $\la|\cdot|\ra_{t\lam}$) and also on
$H^s_{0,q}(M)$ (with respect to $\la|\Lambda^s\cdot|\ra_{t\lam}$) if $t\geq T_s$;
\item The space of harmonic forms $\H^q_t(M)$, defined to be the $(0,q)$-forms annihilated by $\dbarb$ and $\dbarb^{*,t}$ is finite dimensional;
\item The complex Green operator $G_{q,t}$ is continuous on $L^2_{0,q}(M)$
(with respect to $\la|\cdot|\ra_{t\lam}$) and also on
$H^s_{0,q}(M)$ (with respect to $\la|\Lambda^s\cdot|\ra_{t\lam}$) if $t\geq T_s$;
\item The canonical solution operators
for $\dbarb$, $\dbarb^{*,t} G_{q,t}:L^2_{0,q}(M)\to L^2_{0,q-1}(M)$ and $G_{q,t}\dbarb^{*,t} : L^2_{0,q+1}(M)\to L^2_{0,q}(M)$ are continuous (with respect to
$\la|\cdot|\ra_{t\lam}$).
Additionally,\\ $\dbarb^{*,t} G_{q,t}:H^s_{0,q}(M)\to H^s_{0,q-1}(M)$ and $G_{q,t}\dbarb^{*,t} : H^s_{0,q+1}(M)\to H^s_{0,q}(M)$
are continuous (with respect to $\la|\Lambda^s\cdot|\ra_{t\lam}$) if $t\geq T_s$.
\item The canonical solution operators for $\dbarb^{*,t}$,
$\dbarb G_{q,t}:L^2_{0,q}(M)\to L^2_{0,q+1}(M)$
and $G_{q,t}\dbarb : L^2_{0,q-1}(M)\to L^2_{0,q}(M)$ are continuous (with respect to $\la|\cdot|\ra_{t\lam}$).
Additionally,\\ $\dbarb G_{q,t}:H^s_{0,q}(M)\to H^s_{0,q+1}(M)$
and $G_{q,t}\dbarb : H^s_{0,q-1}(M)\to H^s_{0,q}(M)$ are continuous (with respect to $\la|\Lambda^s\cdot|\ra_{t\lam}$) if $t\geq T_s$.
\item The Szeg\"o projections $S_{q,t} = I - \dbarb^{*,t}\dbarb G_{q,t}$ and $S_{q-1,t} = I - \dbarb^{*,t} G_{q,t} \dbarb$ are continuous on
$L^2_{0,q}(M)$ and $L^2_{0,q-1}(M)$, respectively and with respect to $\la|\cdot|\ra_{t\lam}$. Additionally,
if $t\geq T_s$, then $S_{q,t}$ and $S_{q-1,t}$ are continuous on
$H^s_{0,q}$ and $H^s_{0,q-1}$ (with respect to $\la|\Lambda^s\cdot|\ra_{t\lam}$), respectively.
\item  If $\tilde q = q$ or $q+1$ and
$\alpha\in H^s_{0,q}(M)$ so that  $\dbarb \alpha =0$ and $\alpha \perp \H^{\tilde q}_t$ (with respect to $\la|\cdot|\ra_{t\lam}$),
then there exists $u\in H^s_{0,\tilde q-1}(M)$ so that
\[
\dbarb u = \alpha;
\]
\item  If $\tilde q = q$ or $q+1$ and $\alpha\in C^\infty_{0,\tilde q}(M)$ satisfies $\dbarb\alpha=0$ and $\alpha \perp \H^{\tilde q}_t$
(with respect to $\la| \cdot, \cdot |\ra_{t\lam}$),
then there exists $u\in C^\infty_{0,\tilde q-1}(M)$ so that
\[
\dbarb u = \alpha.
\]
\end{enumerate}
\end{theorem}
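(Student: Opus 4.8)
The plan is to follow Harrington--Raich \cite{HaRa11} and Nicoara \cite{Nic06} essentially verbatim: statements (i)--(x) are consequences of the microlocal basic estimates \eqref{L2} and \eqref{Hs}, the norm equivalence \eqref{eqn:norm equivalence}, standard Hodge theory, and an elliptic regularization, and the only ingredient that needs genuinely new input in the non-embedded setting is the last one. First I would dispose of the compact error term in \eqref{L2}. Since $M$ is compact, $L^2_{0,q}(M)\hookrightarrow H^{-1}_{0,q}(M)$ is compact, so the $\la|\cdot|\ra_{t\lambda}$-unit sphere of $\H^q_t(M)$ --- on which $Q_{b,t\lambda}\la| u,u|\ra=0$, whence \eqref{L2} forces $\la| u|\ra_{t\lambda}\le c_t\no{u}_{H^{-1}}$ --- is totally bounded, so $\H^q_t(M)$ is finite-dimensional; this is (iv). The usual argument by contradiction, again using Rellich, then promotes \eqref{L2} to the coercive estimate
\[
\la| u|\ra_{t\lambda}^2\le\frac{c}{t}\,Q_{b,t\lambda}\la| u,u|\ra,\qquad u\in C^\infty_{0,q}(M)\cap\H^q_t(M)^\perp,
\]
for $q_0\le q\le n-q_0$ and $t$ large; by \eqref{eqn:norm equivalence} this extends by continuity to all of $L^2_{0,q}(M)\cap\H^q_t(M)^\perp$.

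The $L^2$ statements (i)--(iii) and (v)--(viii) now follow from this coercive estimate by the standard Hodge-theoretic machinery (see, e.g., \cite[Ch.~4]{ChSh01}): $Q_{b,t\lambda}$ with form domain $\T{Dom}(\dbarb)\cap\T{Dom}(\dbarb^{*,t})$ is a closed, densely defined, non-negative form, and the coercive estimate shows its associated self-adjoint operator $\Boxb^t$ is bounded below by $t/c$ on $\H^q_t(M)^\perp$. This yields the closed range of $\dbarb$ and $\dbarb^{*,t}$ out of degrees $q$ and $q-1$ and of $\Boxb^t$ (statements (i)--(iii)); the inverse of $\Boxb^t$ on $\H^q_t(M)^\perp$, extended by $0$ on $\H^q_t(M)$, is the bounded operator $G_{q,t}$ (statement (v)). The canonical solution operators $\dbarb^{*,t}G_{q,t}$, $G_{q,t}\dbarb^{*,t}$, $\dbarb G_{q,t}$, $G_{q,t}\dbarb$ and the Szeg\"o projections $I-\dbarb^{*,t}\dbarb G_{q,t}$, $I-\dbarb^{*,t}G_{q,t}\dbarb$ (statements (vi)--(viii)) are assembled from $G_{q,t}$ in the usual way, their continuity resting on $Q_{b,t\lambda}\la| G_{q,t}\varphi,G_{q,t}\varphi|\ra=\la|\varphi,G_{q,t}\varphi|\ra_{t\lambda}\le\la|\varphi|\ra_{t\lambda}\,\la| G_{q,t}\varphi|\ra_{t\lambda}$.

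For the Sobolev statements, the same Rellich-based contradiction argument removes the term $c_t\no{u}_{H^{s-1}}^2$ from \eqref{Hs}, yielding
\[
\la|\Lambda^s u|\ra_{t\lambda}^2\le\frac{c}{t}\Big(\la|\Lambda^s\dbarb u|\ra_{t\lambda}^2+\la|\Lambda^s\dbarb^{*,t}u|\ra_{t\lambda}^2\Big),\qquad u\in C^\infty_{0,q}(M)\cap\H^q_t(M)^\perp,\ \ t\ge T_s.
\]
The obstacle is that this is an a priori estimate for smooth forms, whereas $G_{q,t}\varphi$ is only known a priori to lie in $L^2$, so it cannot be inserted directly. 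I would resolve this, as signalled in the remark following Theorem~\ref{thm:L^2-theory}, by an \emph{intrinsic} elliptic regularization: replace $Q_{b,t\lambda}$ by $Q_{b,t\lambda}+\delta Q_{\mathrm{ell}}$, where $Q_{\mathrm{ell}}$ is a positive elliptic form on $(0,q)$-forms built from the partition of unity $\{\zeta_\nu\}$ and the operators $\Lambda$ (for instance $\sum_\nu\no{\tilde\zeta_\nu\Lambda\zeta_\nu u^\nu}_{L^2}^2$), rather than from an ambient complex manifold, so that $Q_{b,t\lambda}+\delta Q_{\mathrm{ell}}$ is elliptic and its Green operator $G^\delta_{q,t}$ is smoothing; then verify that \eqref{Hs} holds for the regularized problem with constants \emph{uniform in} $\delta$ --- this is where the commutators produced by $Q_{\mathrm{ell}}$ must be shown not to spoil the microlocal bookkeeping behind \eqref{Hs} --- deduce $H^s$-bounds on $G^\delta_{q,t}\varphi$ that are uniform in $\delta$, and pass to the limit $\delta\to0$. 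This gives the continuity of $G_{q,t}$, and hence of the operators in (vi)--(viii), on $H^s_{0,q}(M)$ with respect to $\la|\Lambda^s\cdot|\ra_{t\lambda}$ for $t\ge T_s$, and with it the Sobolev closed-range assertions folded into (i)--(iii). I expect this step --- making the regularization intrinsic to $M$ while keeping it compatible with the microlocal norm $\la|\cdot|\ra_{t\lambda}$ --- to be the main obstacle; the remainder is a routine translation of a coercive estimate into Hodge theory.

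Finally, for (ix) and (x), let $\tilde q=q$ or $q+1$ and let $\alpha$ be a $\dbarb$-closed $(0,\tilde q)$-form with $\alpha\perp\H^{\tilde q}_t(M)$. By the closed-range orthogonal decomposition from (i)--(iii), $\alpha$ lies in the range of $\dbarb$, so $\dbarb u=\alpha$ is solvable, and the canonical solution is $u=\dbarb^{*,t}G_{q,t}\alpha$ when $\tilde q=q$ and $u=G_{q,t}\dbarb^{*,t}\alpha$ when $\tilde q=q+1$ (both operators appear in (vi)). If $\alpha\in H^s_{0,\tilde q}(M)$, then the Sobolev statements established above give $u\in H^s_{0,\tilde q-1}(M)$, which is (ix). If $\alpha\in C^\infty(M)$, this holds for every $s\ge0$, so $u\in\bigcap_{s\ge0}H^s_{0,\tilde q-1}(M)=C^\infty_{0,\tilde q-1}(M)$ by the Sobolev embedding theorem on the compact manifold $M$, which is (x).
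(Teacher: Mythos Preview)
Your overall strategy is the paper's strategy: the paper itself simply says ``the following theorem now follows from the arguments of \cite{HaRa11},'' and you have correctly reconstructed the skeleton of that argument from \eqref{L2}, \eqref{Hs}, and \eqref{eqn:norm equivalence}. Your identification of the intrinsic elliptic regularization as the only genuinely new ingredient is exactly right and matches the remark following Theorem~\ref{thm:L^2-theory}. One small difference: you propose regularizing with $\delta\sum_\nu\|\tilde\zeta_\nu\Lambda\zeta_\nu u^\nu\|_{L^2}^2$, whereas the paper (in Section~\ref{s4}) uses $\delta\big((Tu,Tu)+(u,u)\big)$, exploiting that $\gamma$ and hence $T$ are globally defined on a pseudoconvex-oriented manifold. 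The paper's choice interacts more cleanly with the microlocal bookkeeping, since $T$ already appears in Theorem~\ref{kmh} and Lemma~\ref{l3.2}, so the extra commutators it generates are of types already controlled; your $\Lambda$-based form would work too but requires more commutator accounting.

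There is, however, a genuine gap in your proof of (x). You write: ``If $\alpha\in C^\infty(M)$, this holds for every $s\ge 0$, so $u\in\bigcap_{s\ge 0}H^s_{0,\tilde q-1}(M)=C^\infty_{0,\tilde q-1}(M)$.'' But the canonical solution $u=\dbarb^{*,t}G_{q,t}\alpha$ (or $G_{q,t}\dbarb^{*,t}\alpha$) depends on $t$, and the $H^s$-continuity in (vi) is only asserted for $t\ge T_s$. Since $T_s$ may grow without bound as $s\to\infty$, no single choice of $t$ places the canonical solution in every $H^s$; the $u$ you produce at level $s$ and the $u$ you produce at level $s'$ need not coincide. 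The correct argument, carried out in \cite{Nic06} and \cite{HaRa11}, is a telescoping construction: choose an increasing sequence $t_k\ge T_{s_k}$ with $s_k\to\infty$, take $u_k\in H^{s_k}$ solving $\dbarb u_k=\alpha$, observe that $u_{k+1}-u_k$ is $\dbarb$-closed and lies in $H^{s_k}$, and use the Szeg\"o projections $S_{q-1,t}$ (item (viii)) to modify $u_{k+1}$ by an element of $\ker\dbarb$ so that the differences become small in successively higher Sobolev norms. The resulting series then converges in $C^\infty$ to a smooth solution. Without this step your argument for (x) does not close.
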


Turning to the proof of Theorem \ref{thm:L^2-theory}, a consequence of Theorem \ref{thm:main theorem for weighted spaces} and \eqref{eqn:norm equivalence} is that 
$\dbarb:L^2_{0,\tilde q}(M)\to L^2_{0,\tilde q+1}(M)$ has closed range, $\tilde q = q$ or $q-1$.
Functional analysis shows that the $L^2$-adjoint operators
$\dbarbs:L^2_{0,\tilde q+1}(M) \to L^2_{0,\tilde q}(M)$ also has closed range \cite[Theorem 1.1.1]{Hor65}. Additionally, the finite dimensionality of $\mathcal H^q_t(M)$ combined with the closed range
of $\dbarb$ on $L^2_{0,\tilde q}(M)$, $\tilde q = q,q-1$ 
implies the finite dimensionality of the
unweighted space of harmonic forms $\mathcal H_{0,q}(M)$. While this fact is likely well-known, Straube and Raich give a proof in \cite[p.772]{RaSt08}.

The cases $q=0$ and $q=n$ (when $n\geq 2$) follow easily from the formulas $G_0 = \dbarbs G_1^2 \dbarb$ and $G_n = \dbar G_{n-1}^2 \dbarbs$ and the already proven parts of the theorem.
This concludes the proof of Theorem \ref{thm:L^2-theory}.

%
%
\section{Global hypoellipticity of $\Box_b$ and the Proof of Theorem \ref{thm:CRpsh (0,1)}}\label{sec:hypoellipticity}
\subsection{A weak compactness estimate for $\Box_b$}\label{s6}
In this section, we assume: i) for any $\eps>0$ there exist a vector $T_\eps$ transversal to $T^{1,0}M\oplus T^{0,1}M$ such that $0<c_1<\gamma(T_\eps)<c_2$ uniformly in $\eps$ and 
ii) there exists a  covering $\{U_\eta\}$ by local patches such that on each $U:=U_\eta$ there exists  $\lambda_\eps:=\lambda^\eta_\eps$  so that $\lambda_\eps$ is uniformly bounded and  
$$\la(\L_{\lambda_\eps}+A_\eps d\gamma)\lrcorner u,\bar u\ra\ge \frac{1}{\eps}|\alpha_\eps|^2|u|^2$$
holds on $U$ for all $(0,q_0)$-forms $u\in C^\infty_{0,q}(M)$. Here, $\alpha_\eps=-\{\T{Lie}\}_{T_\eps}(\gamma)$. 
In Section \ref{sec:proving L^2 theory}, we proved estimates for weighted operators in Sobolev spaces. Now, under
our stronger assumption of the existence of $T_\eps$, we will prove estimates in Sobolev spaces for the unweighted system
$(\dib_b,\dib_b^*)$. In order to do that, we use  the composition weight $\phi=\chi(\lambda_\eps)$ for a smooth function $\chi:\R\to \R$ chosen later but satisfying $\dot\chi,\ddot{\chi}> 0$.
By the definition of the Levi form , it follows that
$$\L_{\chi(\lambda_\eps)}=\dot\chi \L_{\lambda_\eps}+\ddot\chi \di_b\lambda_\eps\we\dib_b\lambda_\eps,$$
and hence 
\begin{eqnarray} \label{notice1}\begin{split}
\la (\L_{\chi(\lambda_\eps)}+\dot\chi A_\eps d\gamma)\lrcorner u,\bar u\ra =&\dot\chi\la (\L_{\lambda_\eps}+A_\eps d\gamma)\lrcorner u,\bar u\ra +\ddot\chi|\di_b\lambda_\eps\lrcorner u|^2\\
\ge&\frac{1}{\eps}|\sqrt{\dot{\chi}}\alpha_\eps|^2|u|^2+\ddot\chi|\di_b\lambda_\eps\lrcorner u|^2. 
\end{split}\end{eqnarray}
We also notice that 
\begin{eqnarray}
\label{notice2}|\dib^*_{b,\chi(\phi^\eps)}u|^2 \le 2 |\dib_b^*u|^2+2\dot\chi^2|\di_b\lambda_\eps\lrcorner u|^2.
\end{eqnarray}
Now we use Corollary,~\ref{c3.3}(i) for $\phi:=\chi(\lambda_\eps)$, $A:=\dot\chi(\lambda_\eps) A_\eps$ and plug in \eqref{notice1} and \eqref{notice2} into \eqref{nova2} to obtain
	\begin{eqnarray}\label{400}
	\begin{split}
	c_{\eps}\no{\zeta'\tilde\Psi_{A}^0\zeta u}_{L^2}^2+&c\left(\no{\tilde \zeta\Psi_{A}^+\zeta u}_{L^2_\phi}^2+2\no{\dot\chi\di_b\lambda_\eps\lrcorner \tilde\zeta\Psi_{A}^+\zeta u}_{L^2_\phi}^2+\no{\bar{\partial}_b \tilde \zeta\Psi_{A}^+\zeta u}_{L^2_\phi}^2+\no{\bar{\partial}^*_{b,\phi}\tilde \zeta\Psi_{A}^+\zeta u}_{L^2_\phi}^2\right)\\
	&\ge \frac{1}{\eps}\no{\sqrt{\dot\chi}|\alpha_\eps|\tilde \zeta\Psi_{A}^+\zeta u}_{L^2_\phi}^2+\no{\sqrt{\ddot{\chi}}\di_b\lambda_\eps\lrcorner \tilde \zeta\Psi_{A}^+\zeta u}_{L^2_\phi}^2
	\end{split}
	\end{eqnarray}
	for any $u\in C^\infty_{0,q}(M)$ with $q=q_0,\dots,n$. The function $\lambda_\eps$ is uniformly bounded, so we may assume that $|\lambda_\eps|\le 1$. 
	Thus, if we choose $\chi(t)=\frac{1}{2c}e^{t-1}$ then $\ddot\chi(t)\ge 2c \dot\chi^2(t)$ for $|t|\le 1$, and hence we can absorb 
	$2c\no{\dot\chi\di_b\lambda_\eps\lrcorner \tilde\zeta\Psi_{A_\eps}^+\zeta u}^2_{\chi(\lambda_\eps)}$ by the RHS. By this choice of $\chi$, we also get a uniform bound for 
	$e^{-\chi}$ and $\dot{\chi}\ge \frac{1}{2e^2c}$. Consequently, we can remove the weight from both sides of \eqref{400} and obtain
	\begin{eqnarray}\label{400b}
	\begin{split}
	c_{\eps}\no{\zeta'\tilde\Psi_{A}^0\zeta u}_{L^2}^2+&c\left(\no{\tilde \zeta\Psi_{A}^+\zeta u}_{L^2}^2+\no{\bar{\partial}_b \tilde \zeta\Psi_{A}^+\zeta u}_{L^2}^2+\no{\bar{\partial}^*_{b}\tilde \zeta\Psi_{A}^+\zeta u}_{L^2}^2\right)\\
	&\ge \frac{1}{\eps}\no{|\alpha_\eps|\tilde \zeta\Psi_{A}^+\zeta u}_{L^2}^2
	\end{split}
	\end{eqnarray}
	for any $u\in C^\infty_{0,q}(M)$ with $q\ge q_0$.\\

To bound the $\Psi^-$ terms, we cannot 
use an analogous argument. The problem is that there is no  $|\di_b\lambda_\eps\lrcorner u|^2$ term to absorb unwanted terms.
Indeed,
\begin{eqnarray} \label{no211}\begin{split}
&\la [\Tr(\L_{\chi(\lambda_\eps)})+\dot\chi A\Tr(d\gamma)]\times u,\bar u\ra-\la (\L_{\chi(\lambda_\eps)}+\dot\chi A_\eps d\gamma)\lrcorner u,\bar u\ra \\
&=\dot\chi\left(\la [\Tr(\L_{\lambda_\eps})+A_\eps\Tr(d\gamma)]\times u,\bar u\ra-\la (\L_{\lambda_\eps}+A_\eps d\gamma)\lrcorner u,\bar u\ra\right)\\
	&= \ddot\chi\left(\la\Tr(\di_b\lambda_\eps\we \dib_b\lambda_\eps)\times u,\bar u\ra -|\di_b\lambda_\eps\lrcorner u|^2\right)\\
&\ge \frac{1}{\eps}|\sqrt{\dot{\chi}}\alpha_\eps|^2|u|^2+\ddot\chi|\dib_b\lambda_\eps\we u|^2; 
\end{split}\end{eqnarray}
and  the $|\dib_b\lambda_\eps\we u|$ cannot absorb $|\di_b\lambda_\eps\lrcorner u|$ in general. Instead, we can obtain the estimate for $\Psi^-$ by a Hodge-$*$ argument (see \cite{Koh02, Kha16b}). 
Indeed, using the ideas in \cite[Theorem 5]{Kha16b}, it follows that \eqref{400b} is equivalent to 
	\begin{eqnarray}\label{401}
	\begin{split}
	c_{\eps}\no{\zeta'\tilde\Psi_{A}^0\zeta u}_{L^2}^2+&c\left(\no{\tilde \zeta\Psi_{A}^-\zeta u}_{L^2}^2+\no{\bar{\partial}_b \tilde \zeta\Psi_{A}^-\zeta u}_{L^2}^2+\no{\bar{\partial}^*_{b}\tilde \zeta\Psi_{A}^-\zeta u}_{L^2}^2\right)\\
	&\ge \frac{1}{\eps}\no{|\alpha_\eps|\tilde \zeta\Psi_{A}^-\zeta u}_{L^2}^2
	\end{split}
	\end{eqnarray}
	for any $u\in C^\infty_{0,q}(M)$ with $q\le n-q_0$.\\
	To obtain our global estimates, we use \eqref{400b} and \eqref{401} on each local patch $U_\eta$ of the covering $\{U_\eta\}$, together with the elliptic estimate of the $\Psi^0$-terms and 
	$$\sum_{\eta}\left(\no{[\bar{\partial}_b, \tilde \zeta\Psi_{A}^\pm\zeta] u}_{L^2}^2+\no{[\bar{\partial}_b, \tilde \zeta\Psi_{A}^\pm\zeta ]u}_{L^2}^2\right)
	\le c\left(\no{u}_{L^2}^2+\no{\dib_bu}_{L^2}^2+\no{\dib_b^*u}_{L^2}^2\right)+c_\eps\no{u}^2_{H^{-1}}.$$ We then see
	\begin{eqnarray}
	\begin{split}
	& \frac{1}{\eps}\no{|\alpha_\eps|u}_{L^2}^2\le c\left(\no{\dib_bu}_{L^2}^2+\no{\dib_b^*u}_{L^2}^2+\no{u}_{L^2}^2\right)+c_\eps\no{u}^2_{H^{-1}}
	\end{split}
	\end{eqnarray}
	for any $u\in C^\infty_{0,q}(M)$ with $q_0\le q\le n-q_0$. If $M$ admits a strictly CR-plurisubharmonic function on $(0,q_0)$-forms, we have already proved that 
	$$\no{u}_{L^2}^2\le c\left(\no{\dib_bu}_{L^2}^2+\no{\dib_b^{ *}u}_{L^2}^2\right)+c'\no{u}^2_{H^{-1}}$$
for any $u\in C^\infty_{0,q}(M)$ with $q_0\le q\le n-q_0$. Thus, we have the following theorem.
\begin{theorem}
	Assume that the hypothesis of Theorem~\ref{thm:global regularity} holds. Then for any $\epsilon>0$ there exist a vector $T_\epsilon$ and  a constants $c_\epsilon>0$ such that $c_1\le \gamma(T_\epsilon)\le c_2$ uniformly in $\epsilon$ and 
	\begin{eqnarray}\label{Theta+}
\frac{1}{\eps}\big\||\alpha_\eps|u\big\|_{L^2}^2+\no{u}_{L^2}^2\le c \left(\no{\dib_b u}_{L^2}^2+\no{\dib_b^*u}_{L^2}^2\right)+c_\epsilon \no{u}^2_{H^{-1}}
	\end{eqnarray}
	holds for all $u\in C^\infty_{0,q}(M)$ with $q_0\le q\le n-q_0$ and $\alpha_\eps=-\{\T{Lie}\}_{T_\eps}(\gamma)$.
\end{theorem}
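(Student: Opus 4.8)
The plan is to package the microlocal estimates derived above into a single global inequality; the analytic content is already in hand, so the proof is chiefly a matter of assembling it. First I would fix $\eps>0$ and invoke the hypothesis of Theorem~\ref{thm:global regularity} to produce $\lambda_\eps$, $T_\eps$, $A_\eps$ satisfying \eqref{eqn:1/eps est}, the uniform bounds on $\gamma(T_\eps)$ supplying the constants $c_1,c_2$. The essential device is the composition weight $\phi=\chi(\lambda_\eps)$ with $\chi$ strictly increasing and strictly convex, which produces the extra term $\ddot\chi\,|\di_b\lambda_\eps\lrcorner u|^2$ in $\la(\L_{\chi(\lambda_\eps)}+\dot\chi A_\eps d\gamma)\lrcorner u,\bar u\ra$ as in \eqref{notice1}. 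That term exists precisely to defeat the discrepancy $2\dot\chi^2|\di_b\lambda_\eps\lrcorner u|^2$ between $\no{\dib_{b,\phi}^*u}^2$ and $\no{\dib_b^*u}^2$ recorded in \eqref{notice2}: with $\chi(t)=\tfrac{1}{2c}e^{t-1}$ and $|\lambda_\eps|\le1$ one has $\ddot\chi\ge2c\dot\chi^2$, so after substituting \eqref{notice1} and \eqref{notice2} into the $\Psi^+$ estimate \eqref{nova2} of Corollary~\ref{c3.3}(i) the bad gradient term is absorbed. Since $e^{-\chi}$ and $(\dot\chi)^{-1}$ are then uniformly bounded, the weight can be removed from \eqref{400}, leaving the local $\Psi^+$ estimate \eqref{400b}.

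The $\Psi^-$ half is where the argument genuinely must change, and I expect this to be the main obstacle. As \eqref{no211} shows, the composition-weight trick now yields $\ddot\chi\,|\dib_b\lambda_\eps\we u|^2$ in place of $\ddot\chi\,|\di_b\lambda_\eps\lrcorner u|^2$, and a wedge term cannot absorb a contraction term. My plan is therefore to obtain \eqref{401} not directly but by a Hodge-$*$ duality argument in the spirit of \cite{Koh02,Kha16b}: the $\Psi^-$ estimate on $(0,q)$-forms is equivalent to the $\Psi^+$ estimate on $(0,n-q)$-forms, and since the hypothesis \eqref{eqn:1/eps est} and the range $q_0\le q\le n-q_0$ are symmetric under $q\mapsto n-q$, the already-established \eqref{400b} at degree $n-q$ transfers to the desired \eqref{401} at degree $q$.

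Finally I would globalize. Applying \eqref{400b} and \eqref{401} on each local patch $U_\eta$ of the good cover, summing against the partition of unity, and using $(\Psi^+)^*\Psi^+ + (\Psi^0)^*\Psi^0 + (\Psi^-)^*\Psi^- = Id$ (the dilated operators behaving identically up to lower-order errors), the $\Psi^0$ contribution is controlled by the elliptic estimate valid on $\supp\tilde\psi^0\subset\opC^0$, while the commutators $[\dib_b,\tilde\zeta\Psi^\pm_A\zeta]$ and $[\dib_b^*,\tilde\zeta\Psi^\pm_A\zeta]$ are dominated by $c(\no{u}_{L^2}^2+\no{\dib_bu}_{L^2}^2+\no{\dib_b^*u}_{L^2}^2)+c_\eps\no{u}_{H^{-1}}^2$. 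This reassembles
\[
\tfrac{1}{\eps}\no{|\alpha_\eps|u}_{L^2}^2\le c\big(\no{\dib_bu}_{L^2}^2+\no{\dib_b^*u}_{L^2}^2+\no{u}_{L^2}^2\big)+c_\eps\no{u}_{H^{-1}}^2
\]
for $q_0\le q\le n-q_0$. Adding the basic estimate from Theorem~\ref{thm:L^2-theory}(i), namely $\no{u}_{L^2}^2\le c(\no{\dib_bu}_{L^2}^2+\no{\dib_b^*u}_{L^2}^2)+c'\no{u}_{H^{-1}}^2$ — available because $M$ carries a strictly CR-plurisubharmonic function on $(0,q_0)$-forms — then absorbs the stray $\no{u}_{L^2}^2$ on the right into the data, yielding \eqref{Theta+}.
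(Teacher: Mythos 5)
Your proposal is correct and follows essentially the same route as the paper: the composition weight $\chi(\lambda_\eps)$ with $\chi(t)=\tfrac{1}{2c}e^{t-1}$ to absorb the $\dot\chi^2|\di_b\lambda_\eps\lrcorner u|^2$ discrepancy in the $\Psi^+$ estimate, the Hodge-$*$ duality (as in \cite{Koh02,Kha16b}) to transfer \eqref{400b} at degree $n-q$ to the $\Psi^-$ estimate \eqref{401} at degree $q$, globalization via the partition of unity and the elliptic $\Psi^0$ estimate, and finally addition of the basic estimate $\no{u}_{L^2}^2\le c(\no{\dib_bu}_{L^2}^2+\no{\dib_b^*u}_{L^2}^2)+c'\no{u}^2_{H^{-1}}$ to absorb the stray $L^2$ term. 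No gaps beyond those the paper itself leaves to the cited references.
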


\subsection{Global hypoellipticity for $\Box_b$}
\Label{s4}
Let $s\geq 0$ be an integer and $D^s$ denote a differential operator of order $s$, $\nabla_b=(\nabla_b^{1,0},\nabla_b^{0,1})$ where $\nabla_b^{1,0}=(L_1,\dots, L_n)$ and $\nabla_b^{0,1}=(\bar L_1,\dots,\bar L_n)$.
By the Kohn-Morrey-H\"ormander inequality, for $s\ge 1$, $\epsilon>0$ there exists $c_{\epsilon,s}>0$ such that   
\begin{eqnarray}\label{421}
\begin{split}\no{\nabla_bu}_{H^{s-1}}^2\le&
c_s\left(\no{\nabla_b D^{s-1}u}_{L^2}^2+\no{u}_{H^{s-1}}^2\right)\\
\le& c_{s}\left( \no{\dib_b D^{s-1}u}_{L^2}^2+\no{\dib_b^* D^{s-1}u}_{L^2}^2+\no{u}^2_{H^{s-1}}+\no{u}_{H^s}\no{u}_{H^{s-1}}\right)\\
\le& c_{s}\left(\no{\dib_bu}^2_{H^{s-1}}+\no{\dib_b^*u}^2_{H^{s-1}}\right)+c_{s,\epsilon}\no{u}_{H^{s-1}}^2+\eps\no{u}^2_{H^s},
\end{split}
\end{eqnarray}
Let $T_\eps$ be a global, purely imaginary vector field transversal to $T^{1,0}M\oplus T^{0,1}M$ such that $0<c_1\le\gamma(T_\eps)\le c_2$. 
There exist a function $b_\epsilon$ and a vector field $X_\epsilon\in T^{1,0}M\oplus T^{0,1}M$ such that $0<c_1\le b_\eps\le c_2$ and
$$T=b_\eps T_\eps+X_\eps$$
This implies that for any $s\ge 1$, there exist
constants $c>0$ and $c_{\epsilon,s}>0$ such that 
\begin{eqnarray}
\label{423}
\no{T^su}_{L^2}^2\le c\no{T^s_\eps u}_{L^2}^2+c_{\eps, s}\left(\no{\nabla_b u}^2_{H^{s-1}}+\no{u}_{H^{s-1}}^2\right) + \eps\|u\|_s^2
\end{eqnarray}
From \eqref{421}, \eqref{423} and $\no{u}^2_{H^s}\le c_s\no{\nabla_bu}_{L^2}^2+c\no{T^su}_{L^2}^2$, we get the reduction from $D^s$ to $T^s_\epsilon$ by the inequality
\begin{eqnarray}
\label{424}
\no{u}_{H^s}^2\le c_{s,\eps}\left(\no{\dib_bu}^2_{H^{s-1}}+\no{\dib_b^*u}^2_{H^{s-1}}+\no{u}_{H^{s-1}}^2\right)+c\no{T_\epsilon^su}_{L^2}^2. 
\end{eqnarray}
Moreover, since
$$\no{\dib_bu}^2_{H^{s-1}}+\no{\dib_b^*u}^2_{H^{s-1}}\le c\no{\Box_bu}_{H^{s-2}}\|u\|_{H^s}+c_s\no{u}^2_{H^{s-1}}$$
it follows that
\begin{eqnarray}
\label{424b}
\no{u}_{H^s}^2\le c_{s,\eps}\left(\no{\Box_bu}^2_{H^{s-2}}+\no{u}_{H^{s-1}}^2\right)+c\no{T_\epsilon^su}_{L^2}^2. 
\end{eqnarray}

Denote by $\alpha_\eps^{1,0}$ and $\alpha_\eps^{0,1}$ the $(1,0)$-part and $(0,1)$-part of the real $1$-form 
$\alpha_\eps:=-\{\T{Lie}\}_{T_\eps}(\gamma)$. Now we express $\alpha_\eps^{1,0}$ and $\alpha_\eps^{0,1}$  in a local basis. Let $\alpha_{\eps,j}$ be the $T$-component of the commutator $[T_\eps,L_j]$. Then 
 \begin{eqnarray}
 \label{425}
 \begin{split}
\alpha^{1,0}_\eps(L_j)=-\left(\{\T{Lie}\}_{T_\eps}(\gamma)\right)(L_j)=-\left(T_\eps\gamma(L_j)-\gamma([T_\eps,L_j])\right)=\gamma([T_\eps,L_j])=\alpha_{\eps,j}
 \end{split}
 \end{eqnarray}
 and hence 
$\alpha^{1,0}_\eps=\sum_{j=1}^n\alpha_{\eps,j}\om_j$ and $\alpha^{0,1}_\eps=\overline{\alpha^{1,0}_\eps}=\sum_{j=1}^n\overline{\alpha_{\eps,j}}\bom_j$.
 The commutators $[\dib_b,T_\eps]$, $[\dib_b^*,T_\eps]$ and the forms $\alpha^{1,0}_\eps$ and $\alpha^{0,1}_\eps$  are related by 
 $$[\dib_b,T_\eps]=\alpha^{0,1}_\eps \we T+\tilde{\mathcal X}_\eps =b_\eps\alpha^{0,1}_\eps \we T_\eps +{\mathcal X}_\eps ;$$
 $$[\dib_b^*,T_\eps]=-\alpha^{1,0}_\eps \lrcorner T+\tilde{\mathcal Y}_\eps =-b_\eps\alpha^{1,0}_\eps \lrcorner T_\eps +{\mathcal Y}_\eps .$$
 Here, $\mathcal X_\eps:C^\infty_{0,q}(M)\to C^\infty_{0,q+1}(M)$ and $\mathcal Y_\eps:C^\infty_{0,q}(M)\to C^\infty_{0,q-1}(M)$ so that 
 $\no{\mathcal X_\eps u}_{L^2}\le c_\eps\no{\nabla_b u}_{L^2}$ and $\no{\mathcal Y_\eps u}_{L^2}\le c_\eps\no{\nabla_b u}_{L^2}$.  
 In general, for any $s\ge 1$, there exist $\mathcal X_{s,\eps}:C_{q}^\infty(M)\to C_{q+1}^\infty(M)$ and  $\mathcal Y_{s,\eps}:C_{q}^\infty(M)\to C_{q-1}^\infty(M)$  such that
\begin{eqnarray}
\label{comTs}\begin{split}
[\dib_b,T^s_\eps]=&sb_\eps\alpha^{0,1}_\eps\we T^s_\eps+\mathcal X_{s,\eps};\\
[\dib_b^*,T^s_\eps]=&-s b_\eps\alpha^{1,0}_\eps\lrcorner T^s_\eps+\mathcal Y_{s,\eps},
\end{split}
\end{eqnarray}
and $\no{\mathcal X_{s,\eps} u}_{L^2}+\no{\mathcal Y_{s,\eps}u}_{L^2}\le c_{s,\eps}\no{\nabla_b u}_{H^{s-1}}$. Now we are ready to prove \emph{a priori} estimates and the estimates for elliptic regulation. 
\begin{theorem}
	Assume that for any $\epsilon>0$ there exist a vector $T_\epsilon$ and  a constants $c,c_1,c_2,c_\epsilon>0$ such that $c_1\le \gamma(T_\epsilon)\le c_2$ uniformly in $\epsilon$ and 
	\begin{eqnarray}\label{weakcompact}
	\frac{1}{\eps}\no{|\{\T{Lie}\}_{T_\eps}(\gamma)|u}_{L^2}^2+\no{u}_{L^2}^2\le c \left(\no{\dib_b u}_{L^2}^2+\no{\dib_b^*u}_{L^2}^2\right)+c_\epsilon \no{u}^2_{H^{-1}}
	\end{eqnarray}
	holds for all $u\in C^\infty_{0,q}(M)$ with $q_0\le q\le n-q_0$. Then
\begin{eqnarray}
\label{priori}
\no{u}^2_{H^s}&\le &c_s\left(\no{\dib_bu}^2_{H^s}+\no{\dib_b^*u}^2_{H^s}+\no{u}_{L^2}^2\right)\\
\no{\dib_bu}^2_{H^s}+\no{\dib_b^*u}^2_{H^s}&\le &c_s\left(\no{\Box_bu}^2_{H^s}+\no{u}_{L^2}^2\right)\\
\no{\dib_b\dib_b^*u}^2_{H^s}+\no{\dib_b^*\dib_bu}^2_{H^s}&\le& c_s\left(\no{\Box_bu}^2_{H^s}+\no{u}_{L^2}^2\right)
\end{eqnarray}
for all $u\in C^\infty_{0,q}(M)$ and nonnegative $s\in\Z$. Furthermore, for any $s\in \mathbb N$, there exists $ \delta_s>0$ such that 
\begin{eqnarray}\label{eqn:elliptic reg est}
\no{u}_{H^s}^2\le c_s(\no{\Box_b^\delta u}_{H^s}^2+\no{u}^2)
\end{eqnarray}
holds for all $u\in C^\infty_{0,q}(M)$ uniformly in $\delta\in(0,\delta_s)$. The operator $\Box_b^\delta=\Box_b+\delta \left(T^*T+\T{Id}\right)$.
\end{theorem}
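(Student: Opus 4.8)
The plan is to prove the four estimates together by induction on $s$. For $s=0$ there is essentially nothing to do: the first line of \eqref{priori} is immediate, the second reads $\no{\dbarb u}_{L^2}^2+\no{\dbarbs u}_{L^2}^2=(\Boxb u,u)\le\tfrac12\no{\Boxb u}_{L^2}^2+\tfrac12\no{u}_{L^2}^2$, the third uses $\dbarb\dbarb=\dbarbs\dbarbs=0$, so that $\Boxb u=\dbarb\dbarbs u+\dbarbs\dbarb u$ is an $L^2$-orthogonal splitting and $\no{\dbarb\dbarbs u}_{L^2}^2+\no{\dbarbs\dbarb u}_{L^2}^2=\no{\Boxb u}_{L^2}^2$, and \eqref{eqn:elliptic reg est} is trivial. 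So assume $s\ge1$ and that all four estimates hold at order $s-1$. The whole argument rests on the already-derived microlocal reductions \eqref{421}, \eqref{424}, \eqref{424b}, and the commutator formulas \eqref{comTs}, together with the hypothesis \eqref{weakcompact}; the various small parameters below are fixed in turn (first the one for \eqref{weakcompact}, then the ones for \eqref{424} and \eqref{421}), so there is no circularity.

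The engine is an estimate for $\no{T_\eps^s u}_{L^2}^2$. I would apply \eqref{weakcompact} to the smooth $(0,q)$-form $T_\eps^s u$, then use \eqref{comTs} to rewrite $\dbarb T_\eps^s u=T_\eps^s\dbarb u+sb_\eps\,\alpha_\eps^{0,1}\wedge T_\eps^s u+\mathcal X_{s,\eps}u$ and similarly for $\dbarbs$. Since $\gamma(T_\eps)$ is bounded, $\no{sb_\eps\,\alpha_\eps^{0,1}\wedge T_\eps^s u}_{L^2}\le c_2 s\no{|\alpha_\eps|T_\eps^s u}_{L^2}$, so if $\eps$ is chosen small relative to $s^2c_2^2$ the term $\tfrac1\eps\no{|\alpha_\eps|T_\eps^s u}_{L^2}^2$ on the left of \eqref{weakcompact} absorbs all of these contributions — this is exactly the way the weak compactness hypothesis powers the proof. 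Using $\no{\mathcal X_{s,\eps}u}_{L^2}+\no{\mathcal Y_{s,\eps}u}_{L^2}\le c_s\no{\nabla_b u}_{H^{s-1}}$ and $\no{T_\eps^s u}_{H^{-1}}\le c_s\no{u}_{H^{s-1}}$, one obtains $\no{T_\eps^s u}_{L^2}^2\le c_s(\no{\dbarb u}_{H^s}^2+\no{\dbarbs u}_{H^s}^2+\no{\nabla_b u}_{H^{s-1}}^2+\no{u}_{H^{s-1}}^2)$. Substituting this into \eqref{424}, bounding $\no{\nabla_b u}_{H^{s-1}}^2$ by \eqref{421}, absorbing the resulting $\eps\no{u}_{H^s}^2$ into the left side, and applying the inductive hypothesis to the remaining $\no{u}_{H^{s-1}}^2$ yields the first line of \eqref{priori}. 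The second line then follows by the standard device of moving $\Boxb$ onto $u$ in the $H^s$ inner product: a commutator integration by parts in the microlocal framework gives $\no{\dbarb u}_{H^s}^2+\no{\dbarbs u}_{H^s}^2\le c\,\no{\Boxb u}_{H^s}\no{u}_{H^s}+c_s\no{u}_{H^s}\no{u}_{H^{s-1}}$, into which one inserts the first line to bound $\no{u}_{H^s}^2$, then runs a small-constant absorption and closes with the inductive hypothesis; the third line is obtained the same way from the $L^2$-orthogonal decomposition $\Boxb u=\dbarb\dbarbs u+\dbarbs\dbarb u$, the $H^s$-cross term $(\dbarb\dbarbs u,\dbarbs\dbarb u)_{H^s}$ being a pure commutator because $\dbarb\dbarb=\dbarbs\dbarbs=0$, used together with the second line.

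For the elliptic regularization estimate \eqref{eqn:elliptic reg est} I would rerun the Step-one/first-line argument starting from \eqref{424b}, writing $\Boxb u=\Boxb^\delta u-\delta(T^*T+\T{Id})u$, so that $\no{\Boxb u}_{H^{s-2}}^2\le2\no{\Boxb^\delta u}_{H^s}^2+c_s\delta^2\no{u}_{H^s}^2$; choosing $\delta_s$ with $c_s\delta_s^2<\tfrac12$ lets the $\delta^2$-term be absorbed, and this is exactly what makes the final constant independent of $\delta$. The one delicate change is the control of $\no{T_\eps^s u}_{L^2}^2$: rather than route through the $\Boxb$-adapted second line (which would cost a $\delta\no{u}_{H^{s+2}}$ loss), I would use the nonnegative energy $(\Boxb^\delta T_\eps^s u,T_\eps^s u)=\no{\dbarb T_\eps^s u}_{L^2}^2+\no{\dbarbs T_\eps^s u}_{L^2}^2+\delta\no{TT_\eps^s u}_{L^2}^2+\delta\no{T_\eps^s u}_{L^2}^2$, bound it by $(T_\eps^s\Boxb^\delta u,T_\eps^s u)+([\Boxb^\delta,T_\eps^s]u,T_\eps^s u)$, expand $[\Boxb^\delta,T_\eps^s]=[\Boxb,T_\eps^s]+\delta[T^*T,T_\eps^s]$ via $\Boxb=\dbarb\dbarbs+\dbarbs\dbarb$ and \eqref{comTs}, and observe that the genuinely $\delta$-dependent top-order piece of $\delta[T^*T,T_\eps^s]$ has the form $\delta(TT_\eps^s u,\,\cdot\,)$, which is dominated by $\eps\delta\no{TT_\eps^s u}_{L^2}^2\le\eps(\Boxb^\delta T_\eps^s u,T_\eps^s u)$ and hence absorbed. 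Everything else is absorbed into $\no{u}_{H^s}^2$, into $(\Boxb^\delta T_\eps^s u,T_\eps^s u)$, or into the $\no{|\alpha_\eps|T_\eps^s u}_{L^2}^2$ supplied by \eqref{weakcompact}, or reduces to $\no{u}_{H^{s-1}}^2$; combining this with \eqref{weakcompact} on $T_\eps^s u$, then with the $\delta$-adjusted \eqref{424b} and \eqref{421}, and finally with the inductive hypothesis gives \eqref{eqn:elliptic reg est} uniformly in $\delta\in(0,\delta_s)$.

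The hard part is precisely this $\delta$-uniform bookkeeping: after the substitution $\Boxb=\Boxb^\delta-\delta(T^*T+\T{Id})$ and the various integrations by parts one must verify that every error term is either a small multiple of $\no{u}_{H^s}^2$, $\no{\dbarb u}_{H^s}^2$, $\no{\dbarbs u}_{H^s}^2$, or $(\Boxb^\delta T_\eps^s u,T_\eps^s u)$ — all absorbable — or else is bounded by $\no{u}_{H^{s-1}}^2$ and handled by induction, with no constant depending on $\delta$. The guiding principle is that $\delta(T^*T+\T{Id})$ is used only through its positivity, never as an upper bound: the genuine gain of one derivative in the ``bad'' direction $T$ is furnished entirely by \eqref{weakcompact}, whereas $\delta(T^*T+\T{Id})$ serves only to make $\Boxb^\delta$ elliptic, which guarantees a priori that every Sobolev norm appearing in the computation is finite and hence that the formal manipulations are legitimate — leaving only the uniformity of the constants as the actual content of \eqref{eqn:elliptic reg est}.
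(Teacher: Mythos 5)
Your argument is correct and is essentially the paper's own proof: induction on $s$, reduction of $D^s$ to $T_\eps^s$ via \eqref{421} and \eqref{424}, application of \eqref{weakcompact} to $T_\eps^s u$ with the $s^2$-sized $\alpha_\eps$-commutator terms from \eqref{comTs} absorbed by taking $\eps<\eps_s$, and integration by parts to pass from $(\dib_b,\dib_b^*)$ to $\Box_b$ (with the cross term in the third estimate killed by $\dib_b^2=0$ up to commutators). The only deviation is organizational and occurs in the elliptic-regularization step: the paper proves the auxiliary bound \eqref{t45} and keeps $\delta\no{u}_{H^{s+1}}^2$ on the left of \eqref{t45b} to absorb the $\delta^2\no{u}^2_{H^{s+1}}$ error created when $\Box_b$ is replaced by $\Box_b^\delta$ at level $H^{s-1}$, whereas you work from \eqref{424b} at level $H^{s-2}$ so that this error is only $\delta^2\no{u}^2_{H^s}$ and is absorbed directly, using the positivity $(\Box_b^\delta T_\eps^s u,T_\eps^s u)\ge\delta\no{TT_\eps^s u}^2_{L^2}$ to handle the top-order piece of $\delta[T^*T,T_\eps^s]$ --- both absorptions are legitimate and rest on the same final choice of $\delta_s$ small relative to $c_{\eps,s}$.
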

\begin{proof} We prove \eqref{priori} by inducting in $s$. It is easy to see that \eqref{priori} holds for $s=0$. 
We now assume that \eqref{priori} holds for $s-1$ with $s\ge 1$ and we are going to prove this estimate still holds on level $s$. We first fix $\eps>0$ independent of $s$. 	
We start with \eqref{weakcompact} with $u$ replaced by $T^s_\eps u$ and use the equality 
$$|\alpha_\eps|^2|u|^2=2\left(|\alpha^{0,1}_\eps\we u|^2+|\alpha^{1,0}_\eps\lrcorner u|^2\right)$$ to see
	\begin{eqnarray}
	\label{t420}
	\begin{split}
\no{T^s_\eps u}_{L^2}^2+&\frac{1}{\eps}\left(\no{\alpha^{0,1}_\eps\we T^s_\eps u}_{L^2}^2+\no{\alpha_\eps^{1,0}\lrcorner T^s_\eps u}_{L^2}^2\right)	\\
\le &c\left( \no{\dib_bT^s_\eps  u}_{L^2}^2+\no{\dib_b T^s_\eps u}_{L^2}^2\right)+c_\eps\no{T^s_\eps  u}_{H^{-1}}^2.
	\end{split}
	\end{eqnarray}	
	From \eqref{comTs},  we have
\begin{eqnarray}
\label{t421a}
\begin{split}
\no{\dib_bT^s_\eps  u}_{L^2}^2\le&\, 3\no{T^s_\eps\dib_bu}_{L^2}^2+cs^2\no{\alpha_\eps^{0,1} \we T^s_\eps u}_{L^2}^2+c_{\eps,s}\no{\nabla_b u}^2_{H^{s-1}},\\\
\no{\dib_b^*T^s_\eps  u}_{L^2}^2\le& \, 3\no{T^s_\eps\dib_b^*u}_{L^2}^2+cs^2\no{\alpha^{1,0}_\eps\lrcorner T^s_\eps u}_{L^2}^2+c_{\eps,s}\no{\nabla_b u}^2_{H^{s-1}},
\end{split}
\end{eqnarray}	
where we have used that $b_\eps$ is uniformly bounded. 
However, for each $s$, there exists $\eps_s$ such that for any $\eps<\eps_s$ the expression  
$s^2\left(\no{\alpha^{0,1}_\eps \we T^s_\eps u}_{L^2}^2+\no{\alpha^{1,0}_\eps \lrcorner T^s_\eps u}_{L^2}^2\right)$ can be absorbed. Thus 
	\begin{eqnarray}
	\label{t422a}
	\begin{split}
\no{T^s_\eps u}_{L^2}^2+&\frac{1}{\eps}\left(\no{\alpha^{0,1}_\eps \we T^s_\eps u}_{L^2}^2+\no{\alpha^{1,0}_\eps \lrcorner T^s_\eps u}_{L^2}^2\right)\\
\le c&\left( \no{T^s_{\eps}\dib_b u}_{L^2}^2+\no{T^s_{\eps}\dib^*_b u}_{L^2}^2\right)+c_{s,\eps}\left(\no{\nabla_b u}^2_{H^{s-1}}+\no{u}_{H^{s-1}}^2\right).
	\end{split}
	\end{eqnarray}	
	Combining with \eqref{421} and \eqref{424}, it follows
	\begin{eqnarray}
	\label{t422}
	\begin{split}
	\no{u}_{H^s}^2
	\le c\left( \no{T^s_{\eps}\dib_b u}_{L^2}^2+\no{T^s_{\eps}\dib^*_b u}_{L^2}^2\right)+c_{s,\eps}\left(\no{\dib_b u}^2_{H^{s-1}}+\no{\dib_b^* u}^2_{H^{s-1}}+\no{u}_{H^{s-1}}^2\right)\\
	\end{split}
	\end{eqnarray}	
	Thus the first \emph{a priori} estimate follows by using the inductive hypothesis for $\no{u}^2_{H^{s-1}}$.\\
	
	For the second \emph{a priori} estimate, it follows from \eqref{comTs} that
	\begin{eqnarray}
	\label{t424a}
	\begin{split}
	\no{T^s_\eps \dib_b u}_{L^2}^2=&(T^s_\eps \dib_b^*\dib_bu,T^s_\eps u)+([\dib_b^*,T^s]\dib_bu,T^s_\eps u)+(T^s_\eps\dib_b u, [T^s_\eps,\dib_b]u)\\
	=&(T^s_\eps \dib_b^*\dib_bu,T^s_\eps u)-s(b_\eps\alpha^{1,0}_\eps\lrcorner T^s_\eps\dib_bu,T^s_\eps u)+(\mathcal Y_{s,\eps}\dib_bu,T^s_\eps u)\\
	&+s(T^s_\eps\dib_b u, b_\eps\alpha^{0,1}_\eps\we T^s_\eps u)+(T^s_\eps\dib_b u, \mathcal X_{s,\eps} u);
	\end{split}
	\end{eqnarray}	
	and similarly,
	\begin{eqnarray}
	\label{t424b}
	\begin{split}
	\no{T^s_\eps \dib_b^* u}_{L^2}^2
	=&(T^s_\eps \dib_b\dib_b^*u,T^s_\eps u)+s(b_\eps\alpha_\eps^{0,1}\we T^s_\eps\dib_b^*u,T^s_\eps u)+(\mathcal X_{s,\eps}\dib_b^*u,T^s_\eps u)\\
	&-s(T^s_\eps\dib_b^* u, b_\eps\alpha_\eps^{1,0}\lrcorner T^s_\eps u)+(T^s_\eps\dib_b^* u, \mathcal Y_{s,\eps} u),
	\end{split}
	\end{eqnarray}	

Next, we sum and use the equality $(\alpha^{0,1}_\eps \we u,v)=( u,\alpha^{1,0}_\eps\lrcorner v)$, the $(sc)-(lc)$ inequality, and the uniform boundedness of $b_\eps$ to obtain 
 	\begin{eqnarray}
 	\label{t424c}
 	\begin{split}
 	&\no{T^s_\eps \dib_b u}_{L^2}^2+	\no{T^s_\eps \dib_b^* u}_{L^2}^2\le (T^s_\eps \Box_bu,T^s_\eps u)\\
 	&+c_{s,\eps}\left(\no{\nabla_b\dib_bu}^2_{H^{s-1}}+\no{\nabla_b\dib_b^* u}^2_{H^{s-1}}+\no{\nabla_bu}^2_{H^{s-1}}+\no{\dib_bu}^2_{H^{s-1}}+\no{\dib_b^* u}^2_{H^{s-1}}+\no{u}^2_{H^{s-1}}\right)\\
 	&+\T{sc}\left(\no{T^s_\eps \dib_bu}_{L^2}^2+\no{T^s_\eps\dib_b^* u}_{L^2}^2+\no{T^s_\eps u}_{L^2}^2\right)+\T{lc}s^2\left(\no{\alpha_\eps^{0,1}\we T^s_\eps u}_{L^2}^2+\no{\alpha_\eps^{1,0}\lrcorner T^s_\eps u}_{L^2}^2\right)\\
 	\end{split}
 	\end{eqnarray}	
By \eqref{t422a}, we may absorb the term the last line by choosing $\eps<\eps_s$ sufficiently small. We can bound $\|u\|_s^2$ with (\ref{t422}) \eqref{t424c} and observe
\begin{eqnarray}
\label{t423}
\begin{split}
\no{u}^2_{H^s}+\no{\dib_b u}_{H^s}^2+\no{\dib_b^*  u}_{H^s}^2
\le &\ c(T^s_\eps \Box_b u,T^s_\eps u )+I
\end{split}
\end{eqnarray}	
where 
\begin{eqnarray}
\label{t423b}
\begin{split}I=&c_{s,\eps}\left(\no{\nabla_b\dib_bu}^2_{H^{s-1}}+\no{\nabla_b\dib_b^* u}^2_{H^{s-1}}+\no{\nabla_bu}^2_{H^{s-1}}+\no{\dib_bu}^2_{H^{s-1}}+\no{\dib_b^* u}^2_{H^{s-1}}+\no{u}^2_{H^{s-1}}\right)\\
\le& c_{s,\eps}\Big(\no{\dib_bu}_{s}\no{\dib_bu}_{H^{s-1}}+\no{\dib_b^*u}_{s}\no{\dib_b^*u}_{H^{s-1}}+\no{u}_{H^s}\no{u}_{H^{s-1}}\\
& +\no{\dib_b\dib_b^*u}^2_{H^{s-1}}+\no{\dib_b^*\dib_bu}^2_{H^{s-1}}+\no{\dib_bu}^2_{H^{s-1}}+\no{\dib_b^* u}^2_{H^{s-1}}+\no{u}^2_{H^{s-1}}\Big)\\
\le& \T{sc}\left(\no{u}^2_{H^s}+\no{\dib_b u}_{H^s}^2+\no{\dib_b^*  u}_{H^s}^2\right)+c_{\eps,s}\T{lc}\left(\no{\Box_bu}^2_{H^{s-1}}+\no{u}_{L^2}^2\right)
\end{split}
\end{eqnarray}	
here the second inequality follows by the middle line of \eqref{421} and the last inequality follows by the inductive hypothesis. Thus, \begin{eqnarray}
\label{t42c}
\begin{split}
\no{u}^2_{H^s}+\no{\dib_b u}_{H^s}^2+\no{\dib_b^*  u}_{H^s}^2
\le &\ c(T^s_\eps \Box_b u,T^s_\eps u )+c_{\eps,s}\left(\no{\Box_bu}^2_{H^{s-1}}+\no{u}_{L^2}^2\right)
\end{split}
\end{eqnarray}
The second \emph{a priori} estimate now follows immediately by \eqref{t42c}. The last \emph{a priori} estimate follows by the second \emph{a priori} estimate and the inequality    
\begin{eqnarray}
\nonumber\label{t424}
\begin{split}
&\no{\dib_b\dib_b^*u}^2_{H^s}+\no{\dib_b\dib_b^*u}^2_{H^s}=\no{\Box_bu}^2_{H^s}-2\Re(\Lambda^s \dib_b\dib_b^*u,\Lambda^s \dib_b^*\dib_bu)\\
&= \no{\Box_bu}_{{H^s}}^2-2\Re\Big((\Lambda^s \dib_b\dib_b\dib_b^*u,\Lambda^s \dib_bu)+([\dib_b,\Lambda^s]\dib_b\dib_b^*u,\Lambda^s \dib_bu)+(\Lambda^s \dib_b\dib_b^*u,[\Lambda^s, \dib_b^*]\dib_bu)\Big)\\
&\le\no{\Box_bu}^2_{{H^s}}+\T{sc}\no{\dib_b\dib_b^*u}^2_{H^s}+\T{lc}\no{\dib_bu}^2_{H^s}.
\end{split}
\end{eqnarray}

We now prove (\ref{eqn:elliptic reg est}), the estimate that allows us to use the method of  elliptic regularization. We first show 
\begin{equation}
\label{t45}\no{u}^2_{H^{s+1}}\le c(T^s_\eps T^* T u,T^s_\eps u)+c_{\eps,s}\left(\no{\Box_bu}^2_{H^{s-1}}+\no{u}^2_{H^s}\right).
\end{equation}
This estimate follows quickly by combining \eqref{424b}, 
$$\no{u}^2_{H^{s+1}}\le c\no{TT^{s}_\eps u}_{L^2}^2+c_{\eps,s}\left(\no{\Box_bu}^2_{H^{s-1}}+\no{u}^2_{H^s}\right),$$
and 
\begin{eqnarray}
\begin{split}\no{TT^{s}_\eps u}^2
=&(T^s_\eps T^* T u,T^s_\eps u)+([T^*T, T^{s}_\eps]  u,T^s_\eps u))\\
\le&(T^s_\eps T^*Tu,T^s_\eps u)+\T{sc}\no{u}_{H^{s+1}}^2+c_{\eps,s}\T{lc}\no{u}^2_{H^s}.
\end{split}
\end{eqnarray}
But \eqref{t45} and \eqref{t42c}, we get 
\begin{eqnarray}
\label{t45b}
\begin{split}
\no{u}^2_{H^s}+&\no{\dib_b u}_{H^s}^2+\no{\dib_b^*  u}_{H^s}^2+\delta \no{u}^2_{H^{s+1}}
\\
\le&\, c(T^s_\eps \left((\Box_b+\delta (T^*_{\eps}T_\eps+I)) u,T^s_\eps u \right)
+c_{\eps,s}\left(\no{\Box_bu}^2_{H^{s-1}}+\delta \no{u}^2_{H^s}+\no{u}_{L^2}^2\right)\\
\le &\, c_{\eps,s}\left(\no{\Box_b^\delta u}^2_{H^s}+\no{\Box_b^\delta u^2}_{H^{s-1}}+\no{u}_{L^2}^2\right)\\ 
&\, +\delta c_{\eps,s}\no{u}^2_{H^s}+\delta^2c_{\eps,s}\no{u}^2_{H^{s+1}}, 
\end{split}
\end{eqnarray}
where we have used that $\no{\Box_bu}^2_{H^{s-1}}\le 2\no{\Box_b^\delta u}^2_{H^{s-1}}+\delta^2\no{u}^2_{H^{s+1}}$. Now we fix $\eps$ depending on $s$ so the above estimates hold, and choose $\delta_s$ 
such that $\delta c_{\eps,s}$ and $\delta^2 c_{\eps,s}$ are small for any $\delta\le \delta_s$. Thus, we may absorb the last line and obtain an inequality stronger than the desired inequality. 
\end{proof}
\subsection{Proof of Theorem \ref{thm:global regularity}}
{\it (i) Proof of the global regularity of $G_q$.} 
 For a given $\varphi\in C^\infty_{0,q}(M)$, we first prove that $G_q\varphi\in C^\infty_{0,q}(M)$  by elliptic regularization using the elliptic perturbation  $\Box_b^\delta:=\Box_b+\delta(T^*T+\T{Id})$.
 First, though, we make one short remark. Since $\mathcal H_{0,q}(M)$ is finite dimensional, and all norms on finite dimensional vector spaces are equivalent, it follows that $\|u\|_{L^2} \approx \|u\|_{H^{-1}(M)}$ for any
 $u\in\mathcal H_{0,q}(M)$. From this equivalence
 and the density of smooth forms, we may conclude that harmonic forms are smooth and $\|u\|_{H^s} \approx \|u\|_{L^2}$ where the equivalence depends on $s$ but is independent of the harmonic $(0,q)$-form $u$.

Let $Q^{\delta}_b(\cdot,\cdot)$ be the quadratic form on $H^1_{0,q}(M)$ defined by 
$$Q^{\delta}_b(u,v)=Q_b(u,v)+\delta((Tu,Tv)+(u,v))=(\Box_b^\delta u,v)$$
By (\ref{424}), we have $\no{u}^2_1\le c_\delta Q_b^\delta(u,u)$ for any $u\in H^1_{0,q}(M)$. Consequently, $\Box_b^\delta$ is a self-adjoint, elliptic operator  with inverse $G_q^\delta$.  By elliptic theory, 
we know that if $\varphi\in C^\infty_{0,q}(M)$, then $G_q^\delta \varphi\in C^\infty_{0,q}(M)$. We can therefore use (\ref{eqn:elliptic reg est}) with $u=G^\delta_q\varphi$ and estimate
$$\no{G^\delta_q\varphi}^2_{H^s}\le c_s\left( \no{\Box_b^\delta G_q^\delta\varphi}^2_{H^s}+\no{G_q^\delta u}_{L^2}^2\right)=c_s\left( \no{\varphi}^2_{H^s}+\no{G_q^\delta \varphi}_{L^2}^2\right)$$
where the equality follows from the identity $\Box_b^\delta G_q^\delta=Id$ (since $\T{Ker}(\Box_b^\delta)=\{0\}$). 
By Lemma \ref{lem:lem below}, 
$\no{G_q^\delta \varphi}_{L^2}\le c\no{\varphi}_{L^2}$ uniformly in $\delta$ when $1 \leq q_0\le q\le n-q_0$. 
Thus, $\no{G_q^\delta \varphi}_{H^s}$ is uniformly bounded and hence there exists a subsequence $\delta_k$ and $\tilde u\in H^s_{0,q}(M)$ such that $G^{\delta_k}_q \varphi\to \tilde u$ weakly in $H^s_{0,q}(M)$. 
Consequently, $G^{\delta_k}_q \varphi\to \tilde u$ weakly in the $Q_b$-norm, which means that if $v\in H^2_{0,q}(M)$, then
$$\lim_{\delta_k\to 0}Q_b(G^{\delta_k}_q\varphi,v)=Q_b(\tilde u,v).$$
On the other hand, 
$$Q_b(G_q\varphi,v)=(\varphi,v)=Q^\delta_b(G^\delta_q \varphi,v)=Q_b(G^\delta_q \varphi,v)+\delta\left((TG^\delta_q \varphi,Tv)+(G^\delta_q \varphi,v)\right)$$
for all $v\in H^2_{0,q}(M)$. It follows that
$$|Q_b((G^\delta_q \varphi-G_q\varphi),v)|\le \delta \no{G^\delta_q\varphi}_{L^2}\no{v}_{2}\le c\delta \no{\varphi}_{L^2}\no{v}_2\to 0\quad \T{as $\delta\to 0$}$$
where we have again used the inequality  $\no{G_q^\delta \varphi}_{L^2}\le c\no{\varphi}_{L^2}$ uniformly in $\delta$. 
We therefore have $G_q\varphi=\tilde u\in H^s_{0,q}(M)$. This holds for arbitrary $s\in \mathbb N$, so the Sobolev Lemma implies that $G_q\varphi\in C^\infty_{0,q}(M)$.\\

{\it (ii) Proof of the exact regularity of $G_q$, $\dib_bG_q$, $\dib_b^*G_q$, $I-\dib_b^*\dib_bG_q$ and $I-\dib_b\dib_b^*G_q$.} 
For $\varphi\in C^\infty_{0,q}(M)$, we  use the estimates in Theorem \ref{comTs} with $u=G_q\varphi\in C^\infty_{0,q}(M)$ and observe
\begin{eqnarray}
\label{Cinfty}
\begin{split}
\no{G_q\varphi}_{H^s}^2+&\no{\dib_bG_q\varphi}_{H^s}^2+\no{\dib_b^*G_q\varphi}_{H^s}^2+\no{\dib_b^*\dib_bG_q\varphi}_{H^s}^2+\no{\dib_b\dib_b^*G_q\varphi}_{H^s}^2\\
\le &c_s(\no{\Box_bG_q\varphi}^2_{H^s}+\no{G_q\varphi}_{L^2}^2)=c_s(\no{(I-H_q)\varphi}^2_{H^s}+\no{G_q\varphi}_{L^2}^2) \\
\le&c_s(\no{\varphi}^2_{H^s}+\no{H_q\varphi}^2_{H^s}+\no{G_q\varphi}_{L^2}^2)\\
\le& c_s(\no{\varphi}^2_{H^s}+\no{\varphi}_{L^2})\le c_s\no{\varphi}_{H^s}^2.
\end{split}
\end{eqnarray} 

We have shown the Sobolev estimate  holds for  $\varphi\in C^\infty_{0,q}(M)$, and this space is dense in $H^s_{0,q}(M)$. Consequently, since $G_q$ is continuous on $L^2_{0,q}(M)$, so that the Sobolev estimate carries over to $\varphi\in H^s_{0,q}(M)$. 
This means $G_q$, $\dib_bG_q$, $\dib_b^*G_q$, $I-\dib_b^*\dib_bG_q$, and $I-\dib_b\dib_b^*G_q$ are exactly regular. \\  

For $G_q\dib_b$ and $I-\dib_b^*G_q\dib_b$, let $\varphi\in C^\infty_{0,q-1}(M)$. By using the estimate \eqref{t42c} with $u=G_q\dib_b\varphi\in C^\infty_{0,q}(M)$ we obtain
\begin{eqnarray}
\begin{split}
	\no{G_q\dib_b\varphi}_{H^s}^2+\no{\dib_b^*G_q\dib_b\varphi}_{H^s}^2\le &c(T^s_\eps \Box_b G_q\dib_b \varphi, T^s_\eps G_q\dib_b\varphi)+c_{\eps,s}\left(\no{\Box_bG_q\dib_b\varphi}^2_{H^{s-1}}+\no{G_q\dib_b\varphi}^2_{L^2}\right)\\
	\le &c(T^s_\eps(I-H_q) \dib_b \varphi, T^s_\eps G_q\dib_b\varphi)+c_{\eps,s}\left(\no{(I-H_q)\dib_b\varphi}^2_{H^{s-1}}+\no{\varphi}^2_{L^2}\right)\\
	\le &c\Big((T^s_\eps \varphi, T^s_\eps \dib_b^*G_q\dib_b\varphi)+([T^s_\eps ,\dib_b]\varphi,  T^s_\eps G_q\dib_b\varphi)+(T^s_\eps\varphi,[\dib_b^*,T^s_\eps] G_q\dib_b\varphi)\\
	&+(T^*_\eps T^{s}_\eps H_q\varphi, T^{s-1}_\eps G_q\dib_b\varphi) \Big)+c_{\eps,s}\left(\no{\dib_b\varphi}^2_{H^{s-1}}+\no{\varphi}^2_{L^2}\right)\\
	\le &\T{sc}\left(\no{G_q\dib_b\varphi}_{H^s}^2+\no{\dib_b^*G_q\dib_b\varphi}_{H^s}^2\right)+c_\eps\T{lc}\left(\no{\varphi}_{H^s}^2+\no{H_q\varphi}_{H^{s+1}}^2\right)
\end{split}
\end{eqnarray} 
Absorbing the  sc term by the LHS and using the fact that $\no{H_q\dib_b\varphi}_{H^{s+1}}\le c\no{\dib_b\varphi}_{L^2}\le c\no{\varphi}_1$, we conclude that 
$$	\no{G_q\dib_b\varphi}_{H^s}^2+\no{(I-\dib_b^*G_q\dib_b)\varphi}_{H^s}^2\le c_s\no{\varphi}_{H^s}^2$$
for all $\varphi\in C^\infty_{0,q-1}(M)$. As in the above argument, this Sobolev estimate also holds for $\varphi\in H^s_{0,q-1}(M)$. 
We may then prove the exact regularity of $G_q\dib_b^*$ and $(I-\dib_bG_q\dib_b^*)$ for forms of degree $(0,q+1)$ similarly.\\

Finally, exact regularity of $\dib_b^*G_q$, $G_q\dib_b$, $\dib_bG_q$, $G_q\dib_b^*$ implies  that $\dib_b^*G_q^2\dib_b$ and $\dib_bG_q^2\dib_b^*$ are also exactly regular. 
It is known that on the top degrees the Green operators $G_0$ and $G_n$ are given by $\dib_b^*G_1^2\dib_b$ and $\dib_bG_{n-1}^2\dib_b^*$, respectively.
Moreover, $\dib_bG_0=G_1\dib_b$, $\dib_b^*G_n=G_{n-1}\dib_b^*$. Therefore, if $q=1$ then $G_0, \dib_bG_0$, $G_n$, $\dib_b^*G_n$ are exactly regular.\\

The proof of Theorem ~\ref{thm:global regularity} is complete, pending the following technical lemma.
\begin{lemma}\label{lem:lem below} Fix $1\le q\le n-1$.
Let $M^{2n+1}$ be an abstract CR manifold that the $L^2$ basic estimate 
	\begin{equation}
	\label{L2basic1} \no{u}_{L^2}^2\le c(\no{\dib_bu}_{L^2}^2+\no{\dib_b^*u}_{L^2}^2) + C \|u\|_{H^{-1}}^2
	\end{equation}
	holds for all $u\in \T{Dom}(\dib_b)\cap\T{Dom}(\dib_b)$.	
	Then $\no{G_q^\delta \varphi}\le c \no{\varphi}$ uniformly in $\delta$ for $\varphi\in L^2_{0,q}(M)$.
	\end{lemma}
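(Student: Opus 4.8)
The plan is a direct argument based on the energy identity for $\Boxb^\delta$ together with the orthogonal splitting of $u:=G^\delta_q\varphi$ into its harmonic part and its complement. Since $(\Boxb^\delta u,u)\ge\delta\|u\|^2$ gives $\|G^\delta_q\|\le\delta^{-1}$, only the regime $\delta\to0$ is at issue, and it is enough to treat $\varphi\perp\mathcal H_{0,q}(M)$. I would first extract from \eqref{L2basic1} two facts: that $\mathcal H_{0,q}(M)$ is finite dimensional (a bounded sequence in it is, by \eqref{L2basic1}, whose first term vanishes on harmonic forms, together with Rellich's lemma, precompact in $L^2$), and that the \emph{clean} estimate $\|v\|^2\le c'(\|\dbarb v\|^2+\|\dbarbs v\|^2)$ holds for all $v\in\T{Dom}(\dbarb)\cap\T{Dom}(\dbarbs)$ with $v\perp\mathcal H_{0,q}(M)$ (the usual contradiction/compactness argument). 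Let $H_q$ denote the orthogonal projection onto $\mathcal H_{0,q}(M)$; its range consists of smooth forms, so $C:=\sup\{\|T^*Tg\|:g\in\mathcal H_{0,q}(M),\ \|g\|=1\}$ is finite.

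Next, from $\Boxb^\delta u=\varphi$ I would use the energy identity $\|\dbarb u\|^2+\|\dbarbs u\|^2\le Q^\delta_b(u,u)=(\varphi,u)\le\|\varphi\|\,\|u\|$. As $H_qu$ is harmonic we have $\dbarb(I-H_q)u=\dbarb u$ and $\dbarbs(I-H_q)u=\dbarbs u$, so the clean estimate gives $\|(I-H_q)u\|^2\le c'\|\varphi\|\,\|u\|$. For the harmonic part, testing the equation against $g\in\mathcal H_{0,q}(M)$ and using that $g$ is harmonic and smooth yields $(\varphi,g)=Q^\delta_b(u,g)=\delta\big((Tu,Tg)+(u,g)\big)=\delta\,(u,(I+T^*T)g)$; since $\varphi\perp\mathcal H_{0,q}(M)$ this forces $(u,(I+T^*T)g)=0$. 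Decomposing against $\mathcal H_{0,q}(M)$ and its complement, this reads $(H_qu,(I+B)g)=-((I-H_q)u,(I-H_q)T^*Tg)$, where $Bg:=H_q(T^*Tg)$ is self-adjoint and $\ge0$ on $\mathcal H_{0,q}(M)$ because $(Bg,g)=\|Tg\|^2$. Hence $I+B\ge I$ is invertible with norm $\le1$, and taking $g=(I+B)^{-1}H_qu$ gives $\|H_qu\|^2\le C\,\|(I-H_q)u\|\,\|H_qu\|$, so $\|H_qu\|\le C\,\|(I-H_q)u\|$. Combining, $\|u\|^2\le(1+C^2)\|(I-H_q)u\|^2\le(1+C^2)c'\|\varphi\|\,\|u\|$, which gives $\|u\|\le(1+C^2)c'\|\varphi\|$ with a constant independent of $\delta$.

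The hard part is the harmonic component $H_qu$. When $\mathcal H_{0,q}(M)=0$ the lemma drops out at once from the energy identity and the clean estimate; in general, $H_qu$ must be controlled, and this hinges on two points: that $\mathcal H_{0,q}(M)$ is finite dimensional and consists of smooth forms, so the second-order operator $T^*T$ is bounded on it (this is what lets us integrate by parts onto $g$ rather than onto $u$), and that one may pair $\Boxb^\delta u=\varphi$ with harmonic forms -- which is exactly where the orthogonality $\varphi\perp\mathcal H_{0,q}(M)$ is used and is genuinely needed, since for $\varphi$ with a nonzero harmonic component $\|G^\delta_q\varphi\|$ blows up like $\delta^{-1}$. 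The preliminary upgrade of \eqref{L2basic1} to the clean estimate on $\mathcal H_{0,q}(M)^\perp$ is routine but essential.
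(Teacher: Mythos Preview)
Your argument is correct and takes a genuinely different route from the paper's. The paper proceeds by contradiction and compactness: assuming the estimate $\|u\|^2\le c\,Q_b^\delta(u,u)$ fails, it extracts (via the basic estimate and Rellich's lemma) a normalized $L^2$-limit $u$ with $Q_b^\delta(u,u)=0$, contradicting $Q_b^\delta(u,u)\ge\delta\|u\|^2$. Your proof is direct: the energy identity bounds $(I-H_q)u$ through the clean estimate on $\mathcal H_{0,q}^\perp$, and the harmonic piece $H_qu$ is then controlled by pairing the equation with smooth harmonic test forms and inverting the positive finite-dimensional operator $I+B$ on $\mathcal H_{0,q}$. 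Your approach also isolates a subtlety the paper's argument elides: the conclusion as literally stated (for arbitrary $\varphi\in L^2_{0,q}$) cannot hold when $\mathcal H_{0,q}(M)\ne 0$, since $\|G_q^\delta\varphi\|\gtrsim\delta^{-1}\|\varphi\|$ for nonzero harmonic $\varphi$, and correspondingly the paper's intermediate claim $\|u\|^2\le c\,Q_b^\delta(u,u)$ with $c$ independent of $\delta$ fails for harmonic $u$ as $\delta\to 0$. Your restriction to $\varphi\perp\mathcal H_{0,q}(M)$ is both the correct statement and all that the downstream application requires, since one may replace $\varphi$ by $(I-H_q)\varphi$ (the Green operator $G_q$ annihilates harmonic forms and $H_q\varphi$ is smooth).
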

\begin{proof}Fix $\delta>0$.  It suffices to show that 
	\begin{equation}\label{n1} \no{u}_{L^2}^2\le c Q_b^\delta(u,u)\end{equation} 
	holds for some constant $c>0$ that is independent of $\delta>0$ and $u\in H^1_{0,q}(M)$. 
	The basic estimate certainly implies that
$$\no{u}_{L^2}^2\le cQ_b^\delta(u,u) +c'\no{u}^2_{H^{-1}},$$
uniformly in $\delta$ for all $u\in H^1_{0,q}(M)$. Assume that \eqref{n1} fails. Then there exists $u_k$ with $\no{u_k}_{L^2}^2=1$ so that 
	\begin{equation}\label{n2} \no{u_k}_{L^2}^2\ge k Q_b^\delta(u_k,u_k).\end{equation} 
For $k$ sufficiently large, we can use \eqref{n2} and absorb $Q_b^\delta(u_k,u_k)$ by $\no{u_k}^2$ and to prove 
	\begin{equation}\label{n3} 
\no{u_k}_{L^2}^2\le 2c'\no{u_k}^2_{H^{-1}}.\end{equation} 
Since $L^2_{0,q}(M)$ is compact in $H^{-1}_{0,q}(M)$, there exists a subsequence $u_{k_j}$ that converges in $H^{-1}_{0,q}(M)$. Thus,  \eqref{n3} forces $u_{k_j}$ to converge in $L^2_{0,q}(M)$; and \eqref{n2} forces $u_{k_j}$ to converge in the $Q_b^\delta(\cdot,\cdot)$-norm as well. The limit $u$ satisfies $\no{u}_{L^2}=1$. However, a consequence of \eqref{n2} is that $\no{u}_{L^2}=0$ since 
$Q^\delta_b(u,u)\ge \delta \no{u}_{L^2}^2$.  This is a contradiction and \eqref{n1} holds.
	\end{proof}

\subsection{Proof of Theorem \ref{thm:CRpsh (0,1)}}
   \begin{proof}[Proof of Theorem \ref{thm:CRpsh (0,1)}] Assume that there exists a global contact form $\tilde \gamma$ and a smooth function $\tilde{c}_{00}$ such that the extended Levi matrix $\mathcal{\tilde M}:=\{\tilde c_{ij}\}_{i,j=0}^n$ is  positive semidefinite.  
    Thus we can use the Schwarz inequality for the two vectors $u=(0,u_1,\dots, u_n)$, $v=(1,0,\dots,0)$ in $\C^{n+1}$ and get 
    	\begin{eqnarray}
    	\label{Schwarz}\left|\sum_{j=1}^n\tilde c_{0j}u_j\right|^2=\left|\mathcal{\tilde M}(u,v)\right|^2\le \mathcal{\tilde M}(u,u)\mathcal{\tilde M}(v,v)=\left|\sum_{i,j=1}^n\tilde c_{ij}u_i\bar u_j\right||\tilde{c}_{00}|.
    		\end{eqnarray}
    		On the other hand,  there exists a smooth function $h$ in $M$ such that $\tilde \gamma=e^{-h}\gamma$. Thus, 
    		\begin{eqnarray}
    		\label{dtildegamma}\begin{split}
    		d\tilde\gamma=&\, e^{-h}\,d\gamma-e^{-h}\,dh\we \gamma\\
    		=&-e^{-h}\left(\sum_{i,j=1}^nc_{ij}\,\om_i\we\bom_j+\sum_{j=1}^nc_{0j}\,\gamma\we \om_j
		+\sum_{j=1}^n\overline{c_{0j}}\,\gamma\we \bom_j\right)+e^{-h}\sum_{j=1}^n\left(L_j(h)\,\gamma\we \om_j+\bar L_j(h)\,\gamma\we \bom_j\right)\\
    		=&-\sum_{i,j=1}^ne^{-h}c_{ij}\,\om_i\we\bom_j-\sum_{j=1}^n\left(e^{-h}(c_{0j}-L_j(h))\,\gamma\we \om_j+e^{-h}\overline{(c_{0j}-L_j(h))}\,\gamma\we \bom_j\right)\\
    		=&-\left(\sum_{i,j=1}^n\tilde c_{ij}\,\om_i\we \bom_j+\sum_{j=1}^n\tilde c_{0j}\,\tilde \gamma\we \om_j+\sum_{j=1}^n\overline{ \tilde c_{0j}}\,\tilde \gamma\we \bom_j \right),
    		\end{split}
    		\end{eqnarray}
    		where 
    		$\tilde{c}_{ij}=e^{-h}c_{ij}$ for $i,j=1,\dots,n$ and $\tilde c_{0j}=c_{0j}-L_j(h)$. Substituting these $\tilde c_{ij}$ into \eqref{Schwarz}, we get 
    		\begin{eqnarray}\label{new1}\left|\sum_{j=1}^n(c_{0j}-L_j(h))u_j\right|^2\le  |\tilde c_{00}|\left|\sum_{i,j=1}^n \tilde c_{ij}u_i\bar u_j\right|  = |\tilde c_{00}|e^{-h} \sum_{i,j=1}^n c_{ij}u_i\bar u_j	\end{eqnarray}
    		holds at any $x\in M$ and any vector $u=(u_1,\dots,u_n)\in \C^n$. 
Recall that
    $$\alpha=-\{\T{Lie}\}_{T}(\gamma)=-\sum_{j=1}^n\left(d\gamma(T,L_j)\om_j+d\gamma(T,\bar L_j)\bom_j\right)=\sum_{j=1}^n\left(c_{0j}\om_j+\bar c_{0j}\bom_j\right),$$
    (where the last inequality follows by \eqref{dgamma}).
     If $L=\sum_ju_jL_j\in T^{1,0}M$ then we rewrite  \eqref{new1} as
    	$$|\alpha(L)-dh(L)|\le c d\gamma(L\we\bar L).$$ 
    This calculation implies that $\alpha$ is exact on the null space of Levi form. 
    The argument from \cite[Proposition 1]{StZe15} then finishes the proof. Straube and Zeytuncu's work shows that if $\alpha$ is exact on the null space of the Levi form then for any $\eps$, 
    then we can find a vector $T_\eps$ traversal to $T^{0,1}M\oplus T^{0,1}M$ such that the $T$-component of $[T_\eps,L]$ is less than $\epsilon$ for any unit vector field $L\in T^{1,0}$. 
    Their result is stated for embedding manifolds in $\C^N$ but a careful examination of the proof reveals that embeddedness is an unnecessary assumption with their proof.
        \end{proof}

\bibliographystyle{alpha}

\end{document}